\newtheorem{theorem}{Theorem}[section]
\newtheorem{lemma}[theorem]{Lemma}
\theoremstyle{definition}
\newtheorem{definition}[theorem]{Definition}
\newtheorem{example}[theorem]{Example}
\newtheorem{proposition}[theorem]{Proposition}
\newtheorem{corollary}[theorem]{Corollary}
\theoremstyle{remark}
\numberwithin{equation}{section}
\begin{document}

\title[Types of elements, Gelfand and strongly harmonic rings]{On types of elements, Gelfand and strongly harmonic rings of skew PBW extensions over weak compatible rings}

\author{Andr\'es Chac\'on}
\address{Universidad Nacional de Colombia - Sede Bogot\'a}
\curraddr{Campus Universitario}
\email{anchaconca@unal.edu.co}
\thanks{}

\author{Sebasti\'an Higuera}
\address{Universidad Nacional de Colombia - Sede Bogot\'a}
\curraddr{Campus Universitario}
\email{sdhiguerar@unal.edu.co}
\thanks{}

\author{Armando Reyes}
\address{Universidad Nacional de Colombia - Sede Bogot\'a}
\curraddr{Campus Universitario}
\email{mareyesv@unal.edu.co}
\thanks{The authors were supported by the research fund of Faculty of Science, Code HERMES 53880, Universidad Nacional de Colombia--Sede Bogot\'a, Colombia.}

\subjclass[2020]{16S36, 16S38, 16U40, 16U60}
\keywords{NI ring, skew PBW extension, idempotent element, unit element, von Neumann regular element, clean element, Gelfand ring, harmonic ring}
\date{}
\dedicatory{Dedicated to Professor Oswaldo Lezama}

\begin{abstract}

We investigate and characterize several kinds of elements such as units, idempotents, von Neumann regular, $\pi$-regular and clean elements for skew PBW extensions over weak compatible rings. We also study the notions of Gelfand and Harmonic rings for these families of algebras. The results presented here extend those corresponding in the literature for commutative and noncommutative rings of polynomial type.

\end{abstract}
\maketitle

\section{Introduction}\label{ch0}


Throughout the paper, the term ring  means an associative (not necessarily commutative) ring with identity unless otherwise stated. Additionally, for a ring $R$, the sets $N_{*}(R)$, $N(R)$, $N^{*}(R)$, $U(R)$, $Z(R)$ and $J(R)$ denote the prime radical, the set of nilpotent elements, the upper nil radical (i.e., the sum of all its nil ideals of $R$), the set of units, the center, and the Jacobson radical of $R$, respectively. 

\medskip

Recall that a ring $R$ is {\em reduced} if it has no nonzero nilpotent elements, that is, $N(R)= \{ 0 \}$. On the other hand, a ring $R$ is called {\em semicommutative} if $ab = 0$ implies $aRb = 0$, for any elements $a, b\in R$. If the equality $N(R)^{*} = N(R)$ holds, Marks \cite{Marks2001} called $R$ an {\em NI} ring. If $J(R)=N(R)$, then we say that $R$ is an {\em NJ ring}. The following relations are well-known: reduced $\Rightarrow$ semicommutative $\Rightarrow$  NI, but the converses do not hold (see \cite{ChenCui2011} and \cite{Karamzadeh1987} for more details). A ring $R$ is called {\em Abelian} if every idempotent is central. A ring $R$ is called {\em right} (respectively, {\em left}) {\em duo} if every right (respectively, left) ideal is an ideal. It follows that the relations one sided duo (left or right) $\Rightarrow$ semicommutative $\Rightarrow$ Abelian hold.

\medskip

In 1936, von Neumann \cite{vonNeumann1936} introduced the von Neumann regular rings as an algebraic tool for studying certain lattices and some properties of operator algebras. Briefly, a ring $R$ is called {\em von Neumann regular} if for every element $a\in R$ there exists an element $r\in R$ such that $a = ara$.  These rings are also known as {\em absolutely flat rings} due to their characterization in terms of modules. von Neumann regular rings are of great importance in areas such as topology and functional analysis. More precisely, the prime spectrum of a commutative von Neumann ring establishes relationships with different types of compactifications and homomorphisms of the prime spectrum and the prime spectrum of its ring of idempotent elements (see \cite{Goodearl1979, RubioAcosta2012, Rump2010} for more details). These facts show the close relationship of the von Neumann rings with Boolean rings.

\medskip

Related to the description of idempotent elements and other kinds of elements of a ring, we can find the Gelfand rings and the clean elements. Recall that a ring $R$ is said to be a {\em Gelfand ring} if for each pair of distinct maximal right ideals $M_1$, $M_2$ of $R$, there exist two right ideals $I_1$, $I_2$ of $R$ such that $I_1\not\subseteq M_1$, $I_2\not\subseteq M_2$ and $I_1I_2=0$. On the other hand, an element $a\in R$ is called a {\em clean element} if $a$ is the sum of a unit and an idempotent of $R$. $R$ is said to be a {\em clean ring} if every element of $R$ is clean. In commutative algebra, it is known that every clean ring is Gelfand. Additionally, Gelfand rings are tied to the Zariski topology over a ring, which allows to characterize different properties of the prime spectrum and the maximal spectrum of a ring (for more details, see \cite{Aghajanietal2020}).

\medskip

Continuing with the study of Gelfand rings and their relationship with topological spaces, Mulvey \cite{Mulvey1976} obtained a generalization of Swan's theorem concerning vector bundles over a compact topological space. He established an equivalence between the category of modules over a Gelfand ring and the category of modules over the corresponding compact ringed space. The algebraic $K$-theory of commutative Gelfand rings has been studied by Carral \cite{Carral1980} by showing relationships between the stable rank over Gelfand rings and the covering dimension of the maximal ideal space. These results are analogous to those corresponding for Noetherian rings with respect to the Krull dimension. Zhang et al. \cite{ZhangGelfand} showed that certain topological spaces related to $R$ are normal if and only if the quotient ring $R/N_*(R)$ is Gelfand.


\medskip


Concerning noncommutative rings of polynomial type, for the {\em skew polynomial rings} (also known as {\em Ore extensions})  $R[x;\sigma,\delta]$ introduced by Ore \cite{Ore1933}, Hashemi et al., \cite{Hashemi} investigated characterizations of different elements over skew polynomial rings by using the notion of {\em compatible ring} defined by Annin \cite{Annin2004} (c.f. Hashemi and Moussavi \cite{HashemiMoussavi2005}). Briefly, if $R$ is a ring, $\sigma$ is an endomorphism of $R$, and $\delta$ is a $\sigma$-derivation of $R$, then (i) $R$ is said to be $\sigma$-{\em compatible} if for each $a, b\in R$, $ab = 0$ if and only if $a\sigma(b)=0$ (necessarily, the endomorphism $\sigma$ is injective). (ii) $R$ is called $\delta$-{\em compatible} if for each $a, b\in R$, $ab = 0$ implies $a\delta(b)=0$. (iii) If $R$ is both $\sigma$-compatible and $\delta$-compatible, then $R$ is called ($\sigma,\delta$)-{\em compatible}. With these ring-theoretical notions, Hashemi et al., \cite{Hashemi} characterized the unit elements, idempotent elements, von Neumann regular elements, $\pi$-regular elements, and also the von Neumann local elements of the skew polynomial ring $R[x; \sigma, \delta]$ when the base ring $R$ is a right duo $(\sigma,\delta)$-compatible.

\medskip

Motivated by the notion of compatibility for Ore extensions, Hashemi et al. \cite{Hashemietal2017} and Reyes and Su\'arez \cite {ReyesSuarez2018} introduced independently the $(\Sigma, \Delta)$-{\em compatible rings} (see Section \ref{weakcompatiblerings}) as a natural generalization of $(\sigma, \delta)$-compatible rings with the aim of studying the {\em skew PBW extensions} defined by Gallego and Lezama \cite{GallegoLezama2011} (in Section \ref{Definitions} we will say some words about the generality of these objects with respect to other noncommutative algebras). Examples, ring and module theoretic properties of these extensions over compatible rings have been investigated by some people (e.g. \cite{Hashemietal2017, HigueraReyes2022, ReyesSuarez2018, ReyesSuarez2020}). In particular, Hamidizadeh et al. \cite{Hamidizadehetal2020} characterized the above types of elements of skew PBW extensions over compatible rings generalizing the results obtained by Hashemi et al. \cite{Hashemi}.

\medskip

Since Reyes and Su\'arez \cite{ReyesSuarez2020} introduced the {\em weak $(\Sigma,\Delta)$-compatible rings} as a natural generalization of compatible rings over skew PBW extensions and the {\em weak $(\sigma,\delta)$-compatible rings} defined by Ouyang and Liu \cite{LunquenJingwang2011} for Ore extensions, and Higuera and Reyes \cite{HigueraReyes2022} characterized the weak annihilators and nilpotent associated primes of skew PBW extensions by using this weak notion, an immediate and natural task is to study the types of elements described above of these extensions by considering this weak notion of compatibility, and hence to investigate if it is possible to extend all results established in \cite{Hamidizadehetal2020, Hashemi} to a more general setting. This is the purpose of the paper.

\medskip

With this aim, the article is organized as follows. In Section \ref{Definitions}, we recall some definitions and results about skew PBW extensions and weak $(\Sigma, \Delta)$-compatible rings. Section \ref{originalresults} contains the original results of the paper. More exactly, Section \ref{Elements} presents results concerning the characterization of different types of elements such as idempotents, units, von Neumman regular, and clean elements of skew PBW extensions over weak compatible rings (Theorems \ref{th.units}, \ref{theoremidem}, \ref{Proposition1.3.3.a}, \ref{WeakTheorem4.14}, \ref{WeakTheorem4.15}, \ref{WeakTheorem4.16} and \ref{WeakTheorem4.17}). Next, in Section \ref{Gelfandrings} we investigate the notions of strongly harmonic and Gelfand rings over skew PBW extensions (Propositions \ref{prop.strongly.coef} and \ref{prop.not.local}, and Theorems \ref{th.no.gelfand} and \ref{theorem.harmonic.unique}). Our results generalize those corresponding for skew PBW extensions over right duo rings presented by Hamidizadeh et al. \cite{Hamidizadehetal2020}, and Ore extensions over noncommutative rings presented by Hashemi et al. \cite{Hashemi}. Finally, Section \ref{future} presents some ideas for possible future research.

\medskip

Throughout the paper, $\mathbb{N}$ and $\mathbb{Z}$ denote the natural and integer numbers. We assume the set of natural numbers including zero. 

\section{Definitions and elementary properties}\label{Definitions}

\subsection{Skew Poincar\'e-Birkhoff-Witt extensions}\label{definitionexamplesspbw}
Skew PBW extensions were defined by Gallego and Lezama \cite{GallegoLezama2011} with the aim of generalizing Poincar\'e-Birkhoff-Witt extensions introduced by Bell and Goodearl \cite{BellGoodearl1988} and Ore extensions of injective type defined by Ore \cite{Ore1933}. Over the years, several authors have shown that skew PBW extensions also generalize families of noncommutative algebras such as 3-dimensional skew polynomial algebras introduced by Bell and Smith \cite{BellSmith1990}, diffusion algebras defined by Isaev et al. \cite{IsaevPyatovRittenberg2001}, ambiskew polynomial rings introduced by Jordan in several papers \cite{Jordan1993, Jordan2000,JordanWells1996}, solvable polynomial rings introduced by Kandri-Rody and Weispfenning  \cite{KandryWeispfenninig1990}, almost normalizing extensions defined by McConnell and Robson \cite{McConnellRobson2001}, skew bi-quadratic algebras recently introduced by Bavula \cite{Bavula2021}, and others. For more details about the relationships between skew PBW extensions and other algebras having PBW bases, see \cite{BrownGoodearl2002, Lezamabook2020, GoodearlWarfield2004, McConnellRobson2001} and references therein. 

\begin{definition}[{\cite[Definition 1]{GallegoLezama2011}}] \label{def.skewpbwextensions}
Let $R$ and $A$ be rings. We say that $A$ is a \textit{skew PBW extension over} $R$ (the ring of coefficients), denoted $A=\sigma(R)\langle
x_1,\dots,x_n\rangle$, if the following conditions hold:
\begin{enumerate}
\item[\rm (i)]$R$ is a subring of $A$ sharing the same identity element.
\item[\rm (ii)] There exist finitely many elements $x_1,\dots ,x_n\in A$ such that $A$ is a left free $R$-module, with basis the
set of standard monomials
\begin{center}
${\rm Mon}(A):= \{x^{\alpha}:=x_1^{\alpha_1}\cdots
x_n^{\alpha_n}\mid \alpha=(\alpha_1,\dots ,\alpha_n)\in
\mathbb{N}^n\}$.
\end{center}
Moreover, $x^0_1\cdots x^0_n := 1 \in {\rm Mon}(A)$.
\item[\rm (iii)]For every $1\leq i\leq n$ and any $r\in R\ \backslash\ \{0\}$, there exists $c_{i,r}\in R\ \backslash\ \{0\}$ such that $x_ir-c_{i,r}x_i\in R$.
\item[\rm (iv)]For $1\leq i,j\leq n$, there exists $d_{i,j}\in R\ \backslash\ \{0\}$ such that
\[
x_jx_i-d_{i,j}x_ix_j\in R+Rx_1+\cdots +Rx_n,
\]
i.e. there exist elements $r_0^{(i,j)}, r_1^{(i,j)}, \dotsc, r_n^{(i,j)} \in R$ with
\begin{center}
$x_jx_i - d_{i,j}x_ix_j = r_0^{(i,j)} + \sum_{k=1}^{n} r_k^{(i,j)}x_k$.    
\end{center}
\end{enumerate}
\end{definition}

Since ${\rm Mon}(A)$ is a left $R$-basis of $A$, the elements $c_{i,r}$ and $d_{i, j}$ are unique. Thus, every nonzero element $f \in A$ can be uniquely expressed as $f = \sum_{i=0}^ma_iX_i$, with $a_i \in R$, $X_0=1$, and $X_i \in \text{Mon}(A)$, for $0 \leq i \leq m$  (when necessary, we use the notation $f = \sum_{i=0}^ma_iY_i$)  \cite[Remark 2]{GallegoLezama2011}. 

\begin{proposition}[{\cite[Proposition 3]{GallegoLezama2011}}] \label{sigmadefinition}
If $A=\sigma(R)\langle x_1,\dots,x_n\rangle$ is a skew PBW extension, then there exist an injective endomorphism $\sigma_i:R\rightarrow R$ and a $\sigma_i$-derivation $\delta_i:R\rightarrow R$ such that $x_ir=\sigma_i(r)x_i+\delta_i(r)$, for each $1\leq i\leq n$, where $r\in R$.
\end{proposition}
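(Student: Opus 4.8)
The plan is to extract the maps $\sigma_i$ and $\delta_i$ directly from condition (iii) of Definition \ref{def.skewpbwextensions}, and then to verify the endomorphism and $\sigma_i$-derivation identities by computing products of the form $x_i(\cdot)$ in two different ways and comparing coefficients in the left $R$-basis ${\rm Mon}(A)$.

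First I would fix $1\leq i\leq n$. For $r\in R\setminus\{0\}$, condition (iii) yields $c_{i,r}\in R\setminus\{0\}$ with $x_ir-c_{i,r}x_i\in R$, and (as already observed right after Definition \ref{def.skewpbwextensions}) the element $c_{i,r}$ is unique, since ${\rm Mon}(A)$ is a left $R$-basis of $A$, so the coefficients of $x_ir$ at the distinct basis monomials $x_i$ and $1$ are well defined. I then set $\sigma_i(r):=c_{i,r}$ and $\delta_i(r):=x_ir-c_{i,r}x_i\in R$ for $r\neq 0$, and $\sigma_i(0):=0$, $\delta_i(0):=0$; by construction $x_ir=\sigma_i(r)x_i+\delta_i(r)$ for every $r\in R$, which is the asserted commutation rule.

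It remains to check the algebraic properties, each of which reduces to comparing the coefficients at $x_i$ and at $1$. Biadditivity of the product of $A$ applied to $x_i(r+s)=x_ir+x_is$ shows that $\sigma_i$ and $\delta_i$ are additive. Taking $r=1$ in $x_i\cdot 1=x_i=1\cdot x_i+0$ gives $\sigma_i(1)=1$ and $\delta_i(1)=0$. For multiplicativity, I would expand
\[
x_i(rs)=(x_ir)s=\big(\sigma_i(r)x_i+\delta_i(r)\big)s=\sigma_i(r)(x_is)+\delta_i(r)s=\sigma_i(r)\sigma_i(s)\,x_i+\sigma_i(r)\delta_i(s)+\delta_i(r)s,
\]
and compare with $x_i(rs)=\sigma_i(rs)x_i+\delta_i(rs)$; uniqueness of coefficients then gives $\sigma_i(rs)=\sigma_i(r)\sigma_i(s)$ and $\delta_i(rs)=\sigma_i(r)\delta_i(s)+\delta_i(r)s$, so $\sigma_i$ is a ring endomorphism of $R$ and $\delta_i$ is a $\sigma_i$-derivation. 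Finally, injectivity of $\sigma_i$ is forced by condition (iii) itself: if $r\neq 0$ then $\sigma_i(r)=c_{i,r}\neq 0$, and since $\sigma_i$ is additive this yields $\ker\sigma_i=\{0\}$.

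I do not anticipate a real obstacle. The only point requiring a little care is the repeated appeal to ``comparing coefficients'', which is legitimate precisely because part (ii) of Definition \ref{def.skewpbwextensions} guarantees that $1$ and $x_i$ lie among the elements of a left $R$-basis of $A$; granting this, each of the verifications above is a one-line computation.
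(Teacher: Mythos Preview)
Your argument is correct and is the natural one: extract $\sigma_i$ and $\delta_i$ from condition (iii), then verify additivity, multiplicativity, the $\sigma_i$-derivation law, and injectivity by writing $x_i(r+s)$, $x_i\cdot 1$, and $x_i(rs)$ and reading off coefficients at $1$ and $x_i$ in the left $R$-basis ${\rm Mon}(A)$. Note that the paper does not actually prove this proposition; it is quoted from \cite[Proposition~3]{GallegoLezama2011} without argument, and your write-up is essentially the proof given in that original reference.
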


We use the notation $\Sigma:=\{\sigma_1,\dots,\sigma_n\}$ and $\Delta:=\{\delta_1,\dots,\delta_n\}$ for the families of injective endomorphisms and $\sigma_i$-derivations, respectively, established in Proposition \ref{sigmadefinition}. For a skew PBW extension $A = \sigma(R)\langle x_1,\dotsc, x_n\rangle$ over $R$, we say that the pair $(\Sigma, \Delta)$ is a \textit{system of endomorphisms and $\Sigma$-derivations} of $R$ with respect to $A$. For $\alpha = (\alpha_1, \dots , \alpha_n) \in \mathbb{N}^n$, $\sigma^{\alpha}:= \sigma_1^{\alpha_1}\circ \cdots \circ \sigma_n^{\alpha_n}$, $\delta^{\alpha} := \delta_1^{\alpha_1} \circ \cdots \circ \delta_n^{\alpha_n}$, where $\circ$ denotes the classical composition of functions.

\medskip

We recall some results about quotient rings of skew PBW extensions which are useful for the paper (c.f. \cite{LezamaAcostaReyes2015}).


\begin{definition}[{\cite[Definition 5.1.1]{Lezamabook2020}}]
    Let $R$ be a ring and $(\Sigma,\Delta)$ a system of endomorphisms and $\Sigma$-derivations of $R$. If $I$ is a two-sided ideal of $R$, then $I$ is called $\Sigma$-{\em invariant} if $\sigma^{\alpha}(I)\subseteq I$, where $\alpha \in \mathbb{N}^n$. $I$ is a $\Delta$-{\em invariant} ideal if $\delta^{\alpha}(I)\subseteq I$, where $\alpha \in \mathbb{N}^n$. If $I$ is both $\Sigma$ and $\Delta$-invariant, we say that $I$ is {\em $(\Sigma,\Delta)$-invariant}.
\end{definition}

\begin{proposition}[{\cite[Proposition 5.1.2]{Lezamabook2020}}]
    Let $R$ be a ring, $(\Sigma,\Delta)$ a system of endomorphisms and $\Sigma$-derivations of $R$, $I$ a proper two-sided ideal of $R$ and $\overline{R}:=R/I$. If $I$ is $(\Sigma,\Delta)$-invariant then over $\overline{R}$ is induced a system $(\overline{\Sigma},\overline{\Delta})$ of endomorphisms and $\overline{\Sigma}$-derivations defined by $\overline{\sigma_i}(\overline{r}):=\overline{\sigma_i(r)}$ and $\overline{\delta_i}(\overline{r}):=\overline{\delta_i(r)}$, $1\le i\le n$.
\end{proposition}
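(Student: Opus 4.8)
The plan is to verify, in turn, that each $\overline{\sigma_i}$ and each $\overline{\delta_i}$ is well defined as a map on $\overline{R}$, and then that these maps inherit, axiom by axiom, the structure of a ring endomorphism and of a $\overline{\sigma_i}$-derivation from the corresponding structure on $R$. Throughout, write $\pi\colon R\to\overline{R}$ for the canonical projection and fix an index $1\le i\le n$.

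First I would establish well-definedness, which is the only place the hypothesis is genuinely used. If $\overline{r}=\overline{s}$, i.e. $r-s\in I$, then $\Sigma$-invariance of $I$ (taking the multi-index $\alpha=e_i$ in the definition of $\Sigma$-invariant, so that $\sigma^{\alpha}=\sigma_i$) gives $\sigma_i(r)-\sigma_i(s)=\sigma_i(r-s)\in I$, whence $\overline{\sigma_i(r)}=\overline{\sigma_i(s)}$; the same argument with $\Delta$-invariance handles $\overline{\delta_i}$. Equivalently, one may observe that the ring homomorphism $\pi\circ\sigma_i$ and the additive map $\pi\circ\delta_i$ both vanish on $I$, hence factor uniquely through $\pi$, producing maps $\overline{\sigma_i}$ and $\overline{\delta_i}$ on $\overline{R}$ characterized by $\overline{\sigma_i}\circ\pi=\pi\circ\sigma_i$ and $\overline{\delta_i}\circ\pi=\pi\circ\delta_i$.

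The remaining verifications are then purely formal transports of identities along the surjection $\pi$. Since $\pi$ is a surjective ring homomorphism and $\sigma_i$ is a ring endomorphism of $R$, the map $\overline{\sigma_i}$ is additive and multiplicative and sends $\overline{1}=\pi(1)$ to $\pi(\sigma_i(1))=\pi(1)$, so it is a ring endomorphism of $\overline{R}$. Additivity of $\overline{\delta_i}$ follows in exactly the same fashion, and the Leibniz rule for $\overline{\delta_i}$ is obtained by applying $\pi$ to the identity $\delta_i(rs)=\sigma_i(r)\delta_i(s)+\delta_i(r)s$ valid in $R$ and then rewriting each term via the defining relations of $\overline{\sigma_i}$ and $\overline{\delta_i}$; this yields $\overline{\delta_i}(\overline{r}\,\overline{s})=\overline{\sigma_i}(\overline{r})\,\overline{\delta_i}(\overline{s})+\overline{\delta_i}(\overline{r})\,\overline{s}$. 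Running this argument over all $i$ produces the announced system $(\overline{\Sigma},\overline{\Delta})$ of endomorphisms and $\overline{\Sigma}$-derivations of $\overline{R}$.

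I do not anticipate any real obstacle: the content is entirely concentrated in the well-definedness step, and that step is precisely what $(\Sigma,\Delta)$-invariance was defined to supply. The only subtlety worth flagging is that injectivity of the $\sigma_i$ need not descend to the $\overline{\sigma_i}$; this is harmless, since a system of endomorphisms and $\Sigma$-derivations in the sense of Definition~5.1.1 is not required to consist of injective maps (injectivity was automatic on $R$ only because there the $\sigma_i$ arise from a skew PBW extension, cf. Proposition~\ref{sigmadefinition}).
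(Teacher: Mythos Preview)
Your argument is correct and is exactly the standard verification one would expect: well-definedness from $(\Sigma,\Delta)$-invariance, followed by the routine transport of the endomorphism and $\sigma_i$-derivation identities along the surjection $\pi$. The paper itself does not supply a proof of this proposition---it is quoted verbatim from \cite[Proposition~5.1.2]{Lezamabook2020}---so there is nothing to compare against, but the approach you take is the canonical one and coincides with what that reference contains.
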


\begin{proposition}[{\cite[Proposition 5.1.6]{Lezamabook2020}}]\label{prop.invariant}
Let $A=\sigma(R)\langle x_1,\dots,x_n\rangle$ be a skew PBW extension and $I$ a $(\Sigma,\Delta)$-invariant ideal of $R$. Then:
\begin{enumerate}
    \item[{\rm (1)}] $IA$ is an ideal of $A$ and $IA\cap R=I$. $IA$ is proper if and only if $I$ is proper. Moreover, if for every $1\leq i\leq n$, $\sigma_i$ is bijective and $\sigma_i(I)=I$, then $IA=AI$.
    \item[{\rm (2)}] If $I$ is proper and $\sigma_i(I)=I$ for every $1\leq i\leq n$, then $A/IA$ is a skew PBW extension of $R/I$.
\end{enumerate}
\end{proposition}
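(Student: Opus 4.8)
The plan is to treat part (1) by working directly with the left-free-module structure $A=\bigoplus_{\alpha\in\mathbb{N}^{n}}Rx^{\alpha}$, and part (2) by verifying, one by one, the four conditions of Definition~\ref{def.skewpbwextensions} for $A/IA$ over $R/I$; the only delicate point will be the nonvanishing in $R/I$ of the structure constants $d_{i,j}$. For part (1), I would first note that, since $\mathrm{Mon}(A)$ is a left $R$-basis and $I$ is a (two-sided, in particular right) ideal, the right ideal $IA$ coincides with $\bigoplus_{\alpha}Ix^{\alpha}$: any product $af$ with $a\in I$ and $f=\sum r_{\gamma}x^{\gamma}$ equals $\sum(ar_{\gamma})x^{\gamma}$ with $ar_{\gamma}\in I$, and these span $IA$. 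From this description, $IA\cap R=I$ and the equivalence ``$IA$ is proper if and only if $I$ is proper'' follow at once by inspecting the coefficient of $X_{0}=1$ in the unique expansion of an element of $A$. That $IA$ is moreover a left ideal reduces, since $A$ is generated as a ring by $R$ together with $x_{1},\dots,x_{n}$, to closure under left multiplication by $R$ (immediate) and by each $x_{i}$; here Proposition~\ref{sigmadefinition} gives $x_{i}(ax^{\beta})=\sigma_{i}(a)(x_{i}x^{\beta})+\delta_{i}(a)x^{\beta}$, which lies in $IA$ because $\Sigma$-invariance yields $\sigma_{i}(a)\in I$, $\Delta$-invariance yields $\delta_{i}(a)\in I$, and $x_{i}x^{\beta}\in A$.

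For the last assertion of part (1), assume each $\sigma_{i}$ bijective with $\sigma_{i}(I)=I$; since $\sigma_{i}$ is injective this also forces $\sigma_{i}^{-1}(I)=I$. I would deduce $AI\subseteq IA$ from $x_{i}b=\sigma_{i}(b)x_{i}+\delta_{i}(b)\in IA$ for $b\in I$, and $IA\subseteq AI$ from the identity $ax_{i}=x_{i}\sigma_{i}^{-1}(a)-\delta_{i}(\sigma_{i}^{-1}(a))$, which moves $a\in I$ to the right of $x_{i}$ at the cost of a term $\delta_{i}(\sigma_{i}^{-1}(a))\in I$; in both cases one propagates the inclusion along standard monomials by induction on total degree, using that $IA$ and $AI$ are one-sided ideals.

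For part (2), write $\overline{R}=R/I$, $\overline{A}=A/IA$, and let $\pi\colon A\to\overline{A}$ be the canonical projection. By part (1) the composite $R\hookrightarrow A\to\overline{A}$ has kernel $R\cap IA=I$, so $\overline{R}$ is a subring of $\overline{A}$ sharing the identity $\overline{1}\neq 0$ (nonzero since $IA$ is proper): this is condition (i). Condition (ii) follows by reducing $A=\bigoplus_{\alpha}Rx^{\alpha}$ and $IA=\bigoplus_{\alpha}Ix^{\alpha}$ modulo $IA$, which yields $\overline{A}=\bigoplus_{\alpha}\overline{R}\,\overline{x^{\alpha}}$ with $\overline{x^{\alpha}}=\pi(x_{1})^{\alpha_{1}}\cdots\pi(x_{n})^{\alpha_{n}}$ because $\pi$ is a ring homomorphism. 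For condition (iii), applying $\pi$ to $x_{i}r=\sigma_{i}(r)x_{i}+\delta_{i}(r)$ gives $\pi(x_{i})\pi(r)=\overline{\sigma_{i}(r)}\,\pi(x_{i})+\overline{\delta_{i}(r)}$; the required coefficient is $\overline{\sigma_{i}(r)}$, which is nonzero whenever $\overline{r}\neq 0$ because $\sigma_{i}(r)\in I=\sigma_{i}(I)$ together with injectivity of $\sigma_{i}$ forces $r\in I$, and the remainder $\overline{\delta_{i}(r)}$ lies in $\overline{R}$. Finally, applying $\pi$ to $x_{j}x_{i}-d_{i,j}x_{i}x_{j}=r_{0}^{(i,j)}+\sum_{k}r_{k}^{(i,j)}x_{k}$ produces the candidate relation for condition (iv) with coefficient $\overline{d_{i,j}}$.

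The step I expect to be the main obstacle is showing that this candidate coefficient is nonzero, that is, $d_{i,j}\notin I$. I would isolate the auxiliary fact that, in any skew PBW extension, each $d_{i,j}$ is one-sided invertible in $R$: for $i<j$ the monomial $x_{i}x_{j}$ is standard whereas $x_{j}x_{i}$ is not, so substituting the standard form $x_{j}x_{i}=d_{i,j}x_{i}x_{j}+r_{0}^{(i,j)}+\sum_{k}r_{k}^{(i,j)}x_{k}$ into the relation $x_{i}x_{j}-d_{j,i}x_{j}x_{i}\in R+\sum_{k}Rx_{k}$ and comparing the coefficient of the degree-two basis monomial $x_{i}x_{j}$ forces $d_{j,i}d_{i,j}=1$; symmetrically $d_{i,j}d_{j,i}=1$ when $i>j$, and $d_{i,i}=1$. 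Hence $d_{i,j}\in I$ would give $1\in I$, contradicting that $I$ is proper. Once all four conditions are established, $A/IA=\sigma(R/I)\langle\overline{x_{1}},\dots,\overline{x_{n}}\rangle$ is a skew PBW extension over $R/I$, and by uniqueness of the structure data its associated system of endomorphisms and $\overline{\Sigma}$-derivations is exactly the system induced on $R/I$ in the preceding proposition.
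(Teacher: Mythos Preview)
The paper does not give its own proof of this proposition: it is quoted verbatim from \cite[Proposition~5.1.6]{Lezamabook2020} and used as a black box, so there is no argument in the present paper to compare your proposal against.

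That said, your proof is correct and is essentially the standard one. The description $IA=\bigoplus_{\alpha}Ix^{\alpha}$, the check of left-stability via $x_i(ax^{\beta})=\sigma_i(a)(x_ix^{\beta})+\delta_i(a)x^{\beta}$, and the identity $ax_i=x_i\sigma_i^{-1}(a)-\delta_i(\sigma_i^{-1}(a))$ for the inclusion $IA\subseteq AI$ are exactly the computations carried out in \cite{Lezamabook2020} (and in \cite{LezamaAcostaReyes2015}). Your verification of conditions (i)--(iv) for $A/IA$ is likewise the expected one; in particular, the point you flag as the ``main obstacle'' --- that $d_{i,j}\notin I$ --- is handled in the literature precisely through the relation $d_{j,i}d_{i,j}=1$ obtained by comparing the coefficient of the standard monomial $x_ix_j$ (for $i<j$), which is recorded already in \cite[Remark~2]{GallegoLezama2011}. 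One minor simplification: once you have shown $IA$ is a two-sided ideal, the inclusion $AI\subseteq IA$ follows immediately from $I\subseteq IA$ without a separate inductive argument, so only the reverse inclusion genuinely requires the bijectivity hypothesis.
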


The following result shows the relation between NI rings and invariant ideals.

\begin{proposition}\label{prop.N.invariante}
Let $A=\sigma(R)\langle x_1,\dots,x_n\rangle$ be a skew PBW extension. If $A$ is NI then $N(R)$ is a $(\Sigma,\Delta)$-invariant ideal.
\begin{proof}
Clearly, we have that $N(R)$ is a $\Sigma$-invariant ideal. On the other hand, consider an element $a\in N(R)$. Since $a$ and $\sigma_i(a)$ are elements of $N(A)$ and $N(A)$ is an ideal of $A$, then $\delta_i(a)=x_ia-\sigma_i(a)x_i\in N(A)$, that is, $\delta_i(a)\in N(R)$. Therefore, $N(R)$ is a $(\Sigma,\Delta)$-invariant ideal.
\end{proof}
\end{proposition}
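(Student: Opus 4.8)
The plan is to exploit the fact that a ring is NI precisely when its set of nilpotent elements forms a two-sided ideal, and then to transfer this structure from $A$ down to $R$ via the commutation rule of Proposition \ref{sigmadefinition}. First I would record the elementary observation that, since $R$ is a subring of $A$ sharing the same identity, an element of $R$ is nilpotent in $R$ if and only if it is nilpotent in $A$; hence $N(R) = N(A)\cap R$. Because $A$ is NI, $N(A)$ is an ideal of $A$, and intersecting with the subring $R$ shows that $N(R)$ is an ideal of $R$ (closure under addition and under left and right multiplication by elements of $R$ is immediate, viewing $R\subseteq A$).

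Next I would check $\Sigma$-invariance. If $a\in N(R)$ with $a^k=0$, then for each $i$, since $\sigma_i$ is a ring endomorphism, $\sigma_i(a)^k=\sigma_i(a^k)=0$, so $\sigma_i(a)\in N(R)$; iterating over the coordinates gives $\sigma^{\alpha}(a)\in N(R)$ for every $\alpha\in\mathbb{N}^n$. This step uses only that the $\sigma_i$ are endomorphisms and does not require the NI hypothesis.

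For $\Delta$-invariance I would use the identity $x_i r = \sigma_i(r)x_i + \delta_i(r)$ of Proposition \ref{sigmadefinition}, rewritten as $\delta_i(a) = x_i a - \sigma_i(a) x_i$. Given $a\in N(R)$, we have $a\in N(A)$ and, by the previous paragraph, $\sigma_i(a)\in N(R)\subseteq N(A)$; since $N(A)$ is an ideal of $A$, both $x_i a$ and $\sigma_i(a) x_i$ lie in $N(A)$, whence $\delta_i(a)\in N(A)$. But $\delta_i(a)\in R$, so $\delta_i(a)\in N(A)\cap R = N(R)$. Iterating yields $\delta^{\alpha}(a)\in N(R)$ for all $\alpha$, and combined with the previous step this shows $N(R)$ is $(\Sigma,\Delta)$-invariant.

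The argument is essentially routine; the only point that genuinely needs care, and the only place where the NI hypothesis enters, is that $N(A)$ must be an ideal of $A$ in order to conclude that the product $x_i a$ and the difference $x_i a - \sigma_i(a)x_i$ are nilpotent. Without that, nothing forces $\delta_i(a)$ to be nilpotent. So I would regard verifying that the NI condition is used exactly once, at the $\Delta$-invariance step, as the conceptual heart of the proof rather than any computational difficulty.
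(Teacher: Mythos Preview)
Your proof is correct and follows essentially the same approach as the paper: both arguments establish $\Sigma$-invariance directly from the fact that the $\sigma_i$ are ring endomorphisms, and both deduce $\Delta$-invariance from the identity $\delta_i(a)=x_ia-\sigma_i(a)x_i$ together with the fact that $N(A)$ is an ideal of $A$. Your version is simply more detailed, making explicit the identification $N(R)=N(A)\cap R$ and spelling out why $N(R)$ is an ideal of $R$; note, incidentally, that this last point is a second use of the NI hypothesis, so your remark that it enters ``exactly once'' is slightly off, though the underlying fact invoked (that $N(A)$ is an ideal) is indeed the same in both places.
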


\subsection{Weak $(\Sigma,\Delta)$-compatible rings}\label{weakcompatiblerings}

Let $R$ be a ring and $\sigma$ an endomorphism of $R$. Krempa \cite{Krempa1996} defined $\sigma$ as a {\em rigid endomorphism} if $r\sigma(r)=0$ implies $r=0$, where $r\in R$. In this way, a ring $R$ is called $\sigma$-{\em rigid} if there exists a rigid endomorphism $\sigma$ of $R$. Following Annin \cite{Annin2004} (c.f. Hashemi and Moussavi \cite{HashemiMoussavi2005}), a ring $R$ is said to be $\sigma$-{\em compatible} if for every $a, b \in R$, $ab = 0$ if and only if $a\sigma(b) = 0$; $R$ is called $\delta$-{\em compatible}, if for each $a, b \in R$, we have $ab = 0 \Rightarrow a\delta(b) = 0$. Moreover, if $R$ is both $\sigma$-compatible and $\delta$-compatible, then $R$ is said to be $(\sigma,\delta)$-{\em compatible}. Reyes and Su\'arez \cite {ReyesSuarez2018} and Hashemi, Khalilnezhad and Alhevaz \cite{Hashemietal2017} introduced independently the $(\Sigma, \Delta)$-compatible rings which are a natural generalization of $(\sigma, \delta)$-compatible rings. Briefly, for a ring $R$ with a finite family of endomorphisms $\Sigma$ and a finite family of $\Sigma$-derivations $\Delta$, and considering the notation established in Proposition \ref{sigmadefinition} for families of endomorphisms and derivations, we say that $R$ is $\Sigma$-{\em compatible} if for each $a, b \in R$, $a\sigma^{\alpha}(b) = 0$ if and only if $ab = 0$, where $\alpha \in \mathbb{N}^n$. Similarly, we say that $R$ is $\Delta$-{\em compatible} if for each $a, b \in R$, it follows that $ab = 0$ implies $a\delta^{\beta}(b)=0$, where $\beta \in \mathbb{N}^n$. If $R$ is both $\Sigma$-compatible and $\Delta$-compatible, then $R$ is called $(\Sigma, \Delta)$-{\em compatible}. 

\medskip

Examples of skew PBW extensions over $(\Sigma, \Delta)$-compatible rings include PBW extensions defined by Bell and Goodearl \cite{BellGoodearl1988}, some operator algebras (e.g., the algebra of linear partial differential operators, the algebra of linear partial shift operators, the algebra of linear partial difference operators, the algebra of linear partial $q$-dilation operators, and the algebra of linear partial $q$-differential operators), the class of diffusion algebras \cite{IsaevPyatovRittenberg2001}, quantizations of Weyl algebras, the family of 3-dimensional skew polynomial algebras \cite{BellSmith1990}, and other families of noncommutative algebras having PBW bases. A detailed list of examples can be found in \cite{Hashemietal2017, HigueraReyes2022, Jordan2000, ReyesSuarez2018}.

\medskip

The next definition present the {\em weak compatible rings} which are more general than compatible rings (see Examples \ref{exampleweak1} and \ref{exampleweak2}).

\begin{definition}[{\cite[Definition 4.1]{ReyesSuarez2020}}]\label{def.weakcom}
Let $R$ be a ring with a finite family of endomorphisms $\Sigma$ and a finite family of $\Sigma$-derivations $\Delta$. We say that $R$ is {\it weak $\Sigma$-compatible} if for each $a,b\in R$, $a\sigma^\alpha(b)\in N(R)$ if and only if $ab\in N(R)$, for all $\alpha\in\mathbb{N}^n$. Similarly, $R$ is called {\it weak $\Delta$-compatible} if for each $a,b\in R$, $ab\in N(R)$ implies $a\delta^\beta(b)\in N(R)$, for every $\beta\in\mathbb{N}^n$. If $R$ is both weak $\Sigma$-compatible and weak $\Delta$-compatible, then $R$ is called {\it weak $(\Sigma,\Delta)$-compatible}.
\end{definition}

\begin{proposition}[{\cite[Proposition 4.2]{ReyesSuarez2020}}]\label{prop.weak}
If $R$ is a weak $(\Sigma,\Delta)$-compatible ring, then the following assertions hold:
\begin{enumerate}
\item If $ab\in N(R)$, then $a\sigma^\alpha(b),\sigma^\beta(a)b\in N(R)$, for all elements $\alpha,\beta\in\mathbb{N}^n$.
\item If $\sigma^\alpha(a)b\in N(R)$, for some element $\alpha\in\mathbb{N}^n$, then $ab\in N(R)$.
\item If $a\sigma^\beta(b)\in N(R)$, for some element $\beta\in\mathbb{N}^n$, then $ab\in N(R)$.
\item If $ab\in N(R)$, then $\sigma^\alpha(a)\delta^\beta(b),\delta^\beta(a)\sigma^\alpha(b)\in N(R)$, for every $\alpha,\beta\in\mathbb{N}^n$.
\end{enumerate}
\end{proposition}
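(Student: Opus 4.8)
The plan is to deduce all four assertions from the two defining conditions of weak $(\Sigma,\Delta)$-compatibility together with one elementary observation: in \emph{any} ring, $ab\in N(R)$ if and only if $ba\in N(R)$. Indeed, if $(ab)^{n}=0$ then $(ba)^{n+1}=b(ab)^{n}a=0$, and the reverse implication is symmetric (this needs nothing about $N(R)$ being an ideal, only that it is the set of nilpotent elements). I will call this the \emph{switch}, and use it repeatedly to transport an endomorphism $\sigma^{\alpha}$ or a derivation $\delta^{\beta}$ from the right-hand factor of a product, where the hypotheses act directly, to the left-hand factor, and back.

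For part (1), the membership $a\sigma^{\alpha}(b)\in N(R)$ is precisely the forward half of weak $\Sigma$-compatibility applied to the pair $(a,b)$. For $\sigma^{\beta}(a)b\in N(R)$: from $ab\in N(R)$ the switch gives $ba\in N(R)$; weak $\Sigma$-compatibility applied to the pair $(b,a)$ yields $b\sigma^{\beta}(a)\in N(R)$; and the switch once more gives $\sigma^{\beta}(a)b\in N(R)$. Parts (2) and (3) are the two directions of the converse. Part (3) is immediate, being the ``only if'' half of weak $\Sigma$-compatibility for the pair $(a,b)$ at the given exponent $\beta$. For part (2), apply the switch to $\sigma^{\alpha}(a)b\in N(R)$ to get $b\sigma^{\alpha}(a)\in N(R)$, then invoke weak $\Sigma$-compatibility on the pair $(b,a)$ to obtain $ba\in N(R)$, and switch again to conclude $ab\in N(R)$.

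For part (4), start from $ab\in N(R)$. Weak $\Delta$-compatibility gives $a\delta^{\beta}(b)\in N(R)$, and then part (1), applied to the pair $(a,\delta^{\beta}(b))$, moves $\sigma^{\alpha}$ onto the left factor to give $\sigma^{\alpha}(a)\delta^{\beta}(b)\in N(R)$. For the remaining product, the switch turns $ab\in N(R)$ into $ba\in N(R)$; weak $\Delta$-compatibility gives $b\delta^{\beta}(a)\in N(R)$; the switch returns $\delta^{\beta}(a)b\in N(R)$; and finally part (1), now applied to the pair $(\delta^{\beta}(a),b)$, puts $\sigma^{\alpha}$ on the right factor to give $\delta^{\beta}(a)\sigma^{\alpha}(b)\in N(R)$.

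I do not expect a genuine obstacle here; the only points demanding care are (a) justifying the $ab$--$ba$ nilpotency switch and applying it consistently so that each use of weak $\Sigma$- or $\Delta$-compatibility sees the relevant element in the correct (right-hand) slot, and (b) bookkeeping of the quantifiers, since in (2) and (3) the hypothesis is assumed only for some exponent tuple while (1) and (4) are asserted for all of them. The latter is harmless because the switch argument is uniform in the exponent: whenever it produces a conclusion, it produces it for every exponent appearing in the relevant invocation of the definition.
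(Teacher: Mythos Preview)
Your argument is correct. The key observation that $ab\in N(R)\Leftrightarrow ba\in N(R)$ (your ``switch'') is exactly what is needed to transport the defining conditions of weak $(\Sigma,\Delta)$-compatibility from the right factor to the left, and you apply it cleanly throughout. Each of the four parts follows as you indicate.

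Note, however, that the paper does not supply its own proof of this proposition: it is quoted verbatim as \cite[Proposition 4.2]{ReyesSuarez2020} and used as a black box. So there is no in-paper argument to compare against. Your proof is the standard one and matches what one finds in the cited source; the ``switch'' trick is precisely the device used there as well.
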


We present two examples of skew PBW extensions over weak $(\Sigma,\Delta)$-compatible rings that are not $(\Sigma,\Delta)$-compatible.

\begin{example}\label{exampleweak1} Let $R$ be a reduced ring and  $R_2$ the ring of upper triangular matrices. Consider the endomorphism $\sigma:R_2 \rightarrow R_2$ defined by $\sigma \left( \bigl( \begin{smallmatrix}a & b\\ 0 & c \end{smallmatrix} \bigr) \right)= \bigl(\begin{smallmatrix}a & 0\\0 & c\end{smallmatrix}\bigr)$, and the $\sigma$-derivation $\delta: R_2 \rightarrow R_2$ defined by $\delta \left( \bigl( \begin{smallmatrix}a & b\\ 0 & c \end{smallmatrix} \bigr) \right)= \bigl( \begin{smallmatrix}0 & b\\ 0 & 0 \end{smallmatrix} \bigr)$, for all $\bigl( \begin{smallmatrix}a & b\\ 0 & c \end{smallmatrix} \bigr) \in R_2$. Notice that $\bigl(\begin{smallmatrix} 1 & 1 \\ 0 & 1\end{smallmatrix}\bigr)\cdot \sigma \left(\bigl(\begin{smallmatrix}0 & 1\\ 0 &0 \end{smallmatrix}\bigr) \right)=\bigl(\begin{smallmatrix}0 & 0\\ 0 & 0\end{smallmatrix}\bigr)$ with $\bigl(\begin{smallmatrix} 1 & 1 \\ 0 & 1\end{smallmatrix}\bigr)\cdot \bigl(\begin{smallmatrix}0 & 1\\ 0 &0 \end{smallmatrix}\bigr) \neq \bigl(\begin{smallmatrix}0 & 0\\ 0 & 0\end{smallmatrix}\bigr)$ which means that $R_2$ is not a $(\sigma, \delta)$-compatible ring. On the other hand, the set of nilpotent elements of $R_2$ consists of all matrices of the form $\bigl(\begin{smallmatrix}0 & b\\ 0 & 0\end{smallmatrix}\bigr)$, for any element $b \in R$. In this way, if $\bigl(\begin{smallmatrix} a & b \\ 0 & c\end{smallmatrix}\bigr)\cdot \bigl(\begin{smallmatrix}e & f\\ 0 & h \end{smallmatrix}\bigr) \in N(R_2)$, then $ae=ch=0$. This implies that $\bigl(\begin{smallmatrix} a & b \\ 0 & c\end{smallmatrix}\bigr)\cdot \sigma \left( \bigl( \begin{smallmatrix} e & f\\ 0 & h \end{smallmatrix} \bigr) \right) = \bigl(\begin{smallmatrix} a & b \\ 0 & c\end{smallmatrix}\bigr)\cdot  \bigl(\begin{smallmatrix}e & 0\\ 0 & h \end{smallmatrix}\bigr) \in N(R_2)$. By a similar argument, if $A\sigma(B) \in N(R_2)$, then $AB \in N(R_2)$, for all $A, B \in R_2$. Finally, $A\delta(B) \in N(R_2)$, for all $A,B \in R_2$ with $AB \in N(R_2)$. Therefore, we conclude $R_2$ is a weak $(\sigma, \delta)$-compatible ring. Notice that the Ore extension $R_2[x;\sigma,\delta]$ is a skew PBW extension over $R_2$ which is weak $(\sigma,\delta)$-compatible.
\end{example}

\begin{example}[{\cite[Example 3.2]{Suarezetal2021RNP}}]\label{exampleweak2} Let ${\rm S}_2(\mathbb{Z})$ be a subring of upper triangular matrices defined by ${\rm S}_2(\mathbb{Z})= \left \{ \bigl(\begin{smallmatrix}a & b\\ 0 & a \end{smallmatrix}\bigr) \mid a,b \in \mathbb{Z} \right \}$. Let $\sigma_1={\rm id}_{{\rm S}_2(\mathbb{Z})}$ be the identity endomorphism of ${\rm S}_2(\mathbb{Z})$, and consider $\sigma_2$ and $\sigma_3$ two endomorphisms defined by 
    $\sigma_2 \left( \bigl( \begin{smallmatrix}a & b\\ 0 & a \end{smallmatrix} \bigr) \right)= \bigl(\begin{smallmatrix}a & -b\\0 & a\end{smallmatrix}\bigr)$ and $\sigma_3\left( \bigl( \begin{smallmatrix}a & b\\ 0 & a \end{smallmatrix} \bigr) \right)= \bigl( \begin{smallmatrix}a & 0\\0 & a\end{smallmatrix} \bigr)$.
Note that ${\rm S}_2(\mathbb{Z})$ is not $\sigma_3$-compatible, since for $A=\bigl( \begin{smallmatrix} 1 & 1\\ 0& 1\end{smallmatrix} \bigr)$ and $B=\bigl(\begin{smallmatrix} 0 & 1\\ 0 & 0 \end{smallmatrix}\bigr)$, we have $A\sigma_3(B)=0$ but $AB=\bigl( \begin{smallmatrix} 0 & 1\\ 0& 0\end{smallmatrix}\bigr) \neq 0$. Hence, we conclude ${\rm S}_2(\mathbb{Z})$ is not a $\Sigma$-compatible ring. In the same way, the set of nilpotent elements of $S_2(\mathbb{Z})$ consists of all matrices of the form $\bigl(\begin{smallmatrix}0 & b\\ 0 & 0\end{smallmatrix}\bigr)$, for any $b \in \mathbb{Z}$. An argument similar to the previous example shows that ${\rm S}_2(\mathbb{Z})$ is a weak $\Sigma$-compatible ring, so we can consider a skew PBW extension $A= \sigma({\rm S}_2(\mathbb{Z}))\langle x,y,z \rangle$ with three indeterminates $x, y$ and $z$ satisfying the conditions established in Definition \ref{def.skewpbwextensions}.
\end{example}

Proposition \ref{prop.rigid.and.weak} shows that if $R$ is reduced, then the notions of compatible ring and weak compatible ring coincide (c.f. \cite[Theorem 3.9]{ReyesSuarez2018}).

\begin{proposition}[{\cite[Theorem 4.5]{ReyesSuarez2020}}]\label{prop.rigid.and.weak}
If $A=\sigma(R)\langle x_1,\dots,x_n\rangle$ is a skew PBW extension, then the following statements are equivalent:\begin{enumerate}
    \item $R$ is reduced and weak $(\Sigma,\Delta)$-compatible.
    \item $R$ is $\Sigma$-rigid.
    \item $A$ is reduced.
\end{enumerate}  
\end{proposition}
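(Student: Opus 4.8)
The starting point is the observation that the qualifier ``weak'' is vacuous in the presence of reducedness: if $R$ is reduced then $N(R)=\{0\}$, so the two conditions of Definition \ref{def.weakcom} collapse to ``$a\sigma^\alpha(b)=0$ if and only if $ab=0$'' and ``$ab=0$ implies $a\delta^\beta(b)=0$'', that is, to $(\Sigma,\Delta)$-compatibility. Thus statement (1) is literally equivalent to ``$R$ is reduced and $(\Sigma,\Delta)$-compatible'', and the proposition reduces to the already known trichotomy \cite[Theorem 3.9]{ReyesSuarez2018}. If one prefers a self-contained argument, the plan is to run the cycle $(2)\Rightarrow(1)\Rightarrow(3)\Rightarrow(2)$, using the remark above to pass freely between weak and ordinary compatibility whenever $R$ is reduced.

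For $(2)\Rightarrow(1)$: $\Sigma$-rigidity first forces $R$ to be reduced, since $a^2=0$ gives $(a\sigma_i(a))\,\sigma_i(a\sigma_i(a))=a\,\sigma_i(a^2)\,\sigma_i^2(a)=0$, whence $a\sigma_i(a)=0$ and then $a=0$ by rigidity. Next I would verify the zero-divisor identities $ab=0\Leftrightarrow a\sigma_i(b)=0$ and $ab=0\Rightarrow a\delta_i(b)=0$ by the usual rigidity manipulations --- e.g. $ab=0$ gives $ba=0$ and then $b\sigma_i(a)=0$, so applying $\delta_i$ to $ab=0$ and left multiplying by $b$ yields $b\delta_i(a)b=0$, hence $(b\delta_i(a))^2=0$, hence $b\delta_i(a)=0$, which unwinds (using reducedness and the $\sigma$-identity) to $a\delta_i(b)=0$ --- and then iterate over compositions to obtain full $(\Sigma,\Delta)$-compatibility; with reducedness this is (1). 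For $(3)\Rightarrow(2)$ I would argue through (1): $R$ is a subring of the reduced ring $A$, hence reduced, and a reduced ring is semicommutative (see the Introduction), so $ab=0$ in $R$ forces $a\,A\,b=0$; evaluating at $x^\alpha$ and expanding $a\,x^\alpha b=a\sigma^\alpha(b)x^\alpha+(\text{terms of degree}<|\alpha|)=0$, then reading off coefficients against the $R$-basis $\mathrm{Mon}(A)$, gives $a\sigma^\alpha(b)=0$ and, from the lower-degree part, $a\delta^\beta(b)=0$; the converse $a\sigma^\alpha(b)=0\Rightarrow ab=0$ follows by applying the forward implication to the pair $\sigma^\alpha(b),a$ and invoking injectivity of $\sigma^\alpha$. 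Hence $R$ is reduced and $(\Sigma,\Delta)$-compatible, which is (1), and then $\Sigma$-rigidity is immediate.

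The substantive step is $(1)\Rightarrow(3)$: from ``$R$ reduced and $(\Sigma,\Delta)$-compatible'' (equivalently $\Sigma$-rigid) one must transfer reducedness to $A$. I would fix a monomial order on $\mathrm{Mon}(A)$ and analyse the leading term of a square: for $0\neq f=\sum_{i=0}^m a_iX_i$ with $X_m=x^\alpha$ the largest monomial occurring and $a_m\neq 0$, the coefficient of the top monomial of $f^2$ has $a_m\sigma^\alpha(a_m)$ among its factors, and $a_m\sigma^\alpha(a_m)\neq 0$ by $\Sigma$-rigidity; concluding $f^2\neq 0$, and then reducedness of $A$ by splitting the nilpotency index, is the target. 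The delicate point --- and where I expect the real effort to lie --- is that the remaining factors of that leading coefficient are only products of $\sigma$-images of the structure constants $d_{i,j}$, which in a reduced but non-domain coefficient ring need not be cancellable, so one cannot conclude naively. The clean way around this is to upgrade the statement to an Armendariz-type lemma (``$fg=0$ in $A$ forces $a_ib_j=0$ for every coefficient pair''), proved by induction on the number of terms using $(\Sigma,\Delta)$-compatibility and the $R$-basis property of $\mathrm{Mon}(A)$, from which $f^2=0\Rightarrow a_i^2=0\Rightarrow a_i=0$ is immediate. All remaining bookkeeping over the families $\Sigma$, $\Delta$ and over exponents $\alpha\in\mathbb{N}^n$ is routine.
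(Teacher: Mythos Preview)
The paper does not supply a proof of this proposition; it is imported as \cite[Theorem 4.5]{ReyesSuarez2020}, with the parenthetical ``c.f.\ \cite[Theorem 3.9]{ReyesSuarez2018}'' immediately preceding it. Your opening observation---that for a reduced ring $N(R)=\{0\}$, so weak $(\Sigma,\Delta)$-compatibility collapses to ordinary $(\Sigma,\Delta)$-compatibility and the statement becomes \cite[Theorem 3.9]{ReyesSuarez2018}---is exactly the content of that cross-reference and is correct.

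Your self-contained cycle is also sound, with one small wording fix: in $(3)\Rightarrow(2)$ you write ``a reduced ring is semicommutative \dots\ so $ab=0$ in $R$ forces $aAb=0$'', but semicommutativity of $R$ only gives $aRb=0$. What you need (and have) is that $A$ is reduced, hence $A$ is semicommutative, hence $aAb=0$; the rest of your coefficient-reading argument then goes through. Your identification of the delicate point in $(1)\Rightarrow(3)$ is accurate, and the Armendariz-type lemma you propose is the standard device; note, however, that under the paper's standing hypothesis (stated just after this proposition) that the $d_{i,j}$ are central units, the leading coefficient of $f^2$ is a unit multiple of $a_m\sigma^{\alpha}(a_m)$ and the obstruction you flag disappears.
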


Ouyang and Liu \cite{LunquenJingwang2011} characterized the nilpotent elements of skew polynomial rings over a weak $(\sigma, \delta)$-compatible and NI ring. Reyes and Su\'arez \cite{ReyesSuarez2020} extended this result for skew PBW extensions as the following proposition shows. We assume that the elements $d_{i,j}$ from Definition \ref{def.skewpbwextensions}(iv) are central and invertible in $R$.

\begin{proposition}[{\cite[Theorem 4.6]{ReyesSuarez2020}}]\label{prop.nilp}
If $A=\sigma(R)\langle x_1,\dots,x_n\rangle$ is a skew PBW extension over a weak $(\Sigma,\Delta)$-compatible and NI ring, then $f=\sum_{i=0}^m a_iX_i\in N(A)$ if and only if $a_i\in N(R)$, for all $0\leq i\leq m$. 
\end{proposition}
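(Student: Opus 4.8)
The plan is to prove the two implications separately, using the ideal $N(R)A$ of $A$ as the link between ``$f$ is nilpotent in $A$'' and ``every coefficient of $f$ lies in $N(R)$''. I would open by noting that $N(R)$ is a $(\Sigma,\Delta)$-invariant ideal of $R$: since $R$ is NI, $N(R)=N^{*}(R)$ is an ideal; each $\sigma_i$, being a ring endomorphism, carries nilpotents to nilpotents, so $N(R)$ is $\Sigma$-invariant; and for $a\in N(R)$ we have $1\cdot a\in N(R)$, so weak $\Delta$-compatibility yields $\delta^{\beta}(a)=1\cdot\delta^{\beta}(a)\in N(R)$ for all $\beta\in\mathbb{N}^{n}$, i.e. $N(R)$ is $\Delta$-invariant. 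Hence, by Proposition \ref{prop.invariant}, $N(R)A$ is a two-sided ideal of $A$ with $N(R)A\cap R=N(R)$.

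For ``$a_i\in N(R)$ for all $i$ $\Rightarrow f\in N(A)$'', I would prove that every element of $N(R)A$ is nilpotent by reducing to the nilpotency of the single summands $a_iX_i$ with $a_i\in N(R)$. Writing $X_i=x^{\gamma_i}$ and using that the $d_{i,j}$ are central and invertible, the $x^{k\gamma_i}$-coefficient of $(a_iX_i)^{k}$ equals $a_i\,\sigma^{\gamma_i}(a_i)\cdots\sigma^{(k-1)\gamma_i}(a_i)$ up to a central unit, and one shows that this coefficient, together with every lower coefficient of $(a_iX_i)^{k}$, vanishes for $k$ large by iterating the weak $\Sigma$- and $\Delta$-compatibility consequences of Proposition \ref{prop.weak} on $a_i\in N(R)$. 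The cross terms appearing when one adds the summands of $f$ are coefficient-wise controlled by the same mechanism (and all belong to the ideal $N(R)A$), so $f^{M}=0$ for a suitable $M$.

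For the converse, I would assume $f^{N}=0$ and pass to the quotient $\overline{R}:=R/N(R)$. Since $N(R)$ is $(\Sigma,\Delta)$-invariant, $\overline{R}$ carries an induced system $(\overline{\Sigma},\overline{\Delta})$; it is reduced because $R$ is NI; and it is weak $(\overline{\Sigma},\overline{\Delta})$-compatible, since $N(\overline{R})=0$ turns the weak-compatibility conditions for $R$ into ordinary compatibility conditions for $\overline{R}$. Therefore $\overline{R}$ is $\overline{\Sigma}$-rigid, and by Proposition \ref{prop.rigid.and.weak} the skew PBW extension $\overline{A}:=A/N(R)A$ over $\overline{R}$ is reduced. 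Consequently $\overline{f}^{\,N}=\overline{f^{N}}=0$ forces $\overline{f}=0$ in $\overline{A}$, i.e. $f\in N(R)A$, which by the first paragraph says precisely that every $a_i$ lies in $N(R)$.

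The step I expect to be the main obstacle is the nilpotency claim in the second paragraph: applying the twists $\sigma^{\gamma}\delta^{\eta}$ to the $a_i$ keeps the coefficients of $f^{k}$ inside $N(R)$, but upgrading ``belongs to $N(R)$'' to ``equals $0$'' is exactly where weak compatibility, rather than just NI-ness, does the work, through a careful iterated use of Proposition \ref{prop.weak}. A secondary technical point is that the reduction to $\overline{A}$ presupposes that $\overline{A}=A/N(R)A$ is again a skew PBW extension over $\overline{R}$, which by Proposition \ref{prop.invariant}(2) needs $\sigma_i(N(R))=N(R)$; if one does not want to assume the $\sigma_i$ bijective, one replaces it by a direct leading-term argument: additivity of multidegree along products of standard monomials shows that the coefficient of $x^{N\gamma_m}$ in $f^{N}=0$ is $a_m\sigma^{\gamma_m}(a_m)\cdots\sigma^{(N-1)\gamma_m}(a_m)$ up to a central unit, so this product is $0$; peeling off the $\sigma^{\gamma_m}$-factors one at a time via Proposition \ref{prop.weak}(3) gives $a_m^{N}\in N(R)$, hence $a_m\in N(R)$, and one iterates after deleting the top term.
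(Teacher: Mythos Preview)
The paper does not give its own proof of this proposition; it is quoted from \cite[Theorem~4.6]{ReyesSuarez2020}, so there is no in-paper argument to compare against directly.

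On the mathematics: your route for ``$f$ nilpotent $\Rightarrow$ every $a_i\in N(R)$'' is fine. Both the quotient argument and the direct leading-term induction work; in the latter, the ``peeling'' needs not only Proposition~\ref{prop.weak}(3) but also the NI observation that $xy\in N(R)$ implies $yx\in N(R)$, so that after each peel one can cycle the newly exposed $\sigma$-factor to the right before peeling again.

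The real problem is the other implication, and it lies exactly where you flagged it. Weak compatibility never upgrades ``lies in $N(R)$'' to ``equals $0$'': Definition~\ref{def.weakcom} and Proposition~\ref{prop.weak} speak only about membership in $N(R)$. The upgrade you describe is what \emph{ordinary} $(\Sigma,\Delta)$-compatibility delivers, and it is precisely what is lost in passing to the weak notion. To see that the gap is genuine rather than cosmetic, take $R=\Lambda_k(a_0,a_1,a_2,\dots)$, the exterior algebra on countably many generators over a field, with the shift endomorphism $\sigma(a_j)=a_{j+1}$ and $\delta=0$. Then $R$ is NI (the augmentation ideal is nil and equals $N(R)$) and weak $(\sigma,\delta)$-compatible (since $\sigma$ fixes constant terms, $ab\in N(R)\Leftrightarrow a\sigma(b)\in N(R)$), and $a_0\in N(R)$; yet $(a_0x)^k=a_0a_1\cdots a_{k-1}\,x^k\neq 0$ for every $k$. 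Thus no iteration of Proposition~\ref{prop.weak} can force the coefficients of $(a_iX_i)^k$ to vanish, and your proposed mechanism for ``$a_i\in N(R)$ for all $i\Rightarrow f\in N(A)$'' cannot succeed as written. You should consult \cite{ReyesSuarez2020} for the intended argument and verify whether additional hypotheses are being used there.
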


Ouyang et al. \cite{Lunqunetal2013} introduced the notion of skew $\pi$-Armendariz ring as follows: If $R$ is a ring with an endomorphism $\sigma$ and a $\sigma$-derivation $\delta$, then $R$ is called {\em skew $\pi$-Armendariz ring} if for polynomials $f(x) = \sum_{i=0}^{l} a_ix^i$ and $g(x) = \sum_{j=0}^{m} b_jx^j$ of $R[x; \sigma, \delta]$, $f(x)g(x) \in N(R[x; \sigma, \delta])$ implies that $a_ib_j \in N(R)$, for each $0 \leq i \leq l$ and $0 \leq j \leq m$. Skew $\pi$-Armendariz rings are more general than skew Armendariz rings when the ring of coefficients is $(\sigma,\delta)$-compatible \cite[Theorem 2.6]{Lunqunetal2013}, and also extend $\sigma$-Armendariz rings defined by Hong et al. \cite{Hongetal2006} considering $\delta$ as the zero derivation.

\medskip

Ouyang and Liu \cite{LunquenJingwang2011} showed that if $R$ is a weak $(\sigma,\delta)$-compatible and NI ring, then $R$ is skew $\pi$-Armendariz ring  \cite[Corollary 2.15]{LunquenJingwang2011}. Reyes \cite{Reyes2018} formulated the analogue of skew $\pi$-Armendariz ring in the setting of skew PBW extensions. For a skew PBW extension $A = \sigma(R)\langle x_1,\dotsc, x_n\rangle$ over a ring $R$, we say that $R$ is {\em skew} $\Pi$-{\em Armendariz ring} if for elements $f = \sum_{i=0}^l a_iX_i$ and $g = \sum_{j=0}^m b_jY_j$ belong to $A$, $fg \in N(A)$ implies $a_ib_j \in N(R)$, for each $0 \leq i \leq l$ and $0 \leq j \leq m$. If $R$ is reversible and $(\Sigma,\Delta)$-compatible, then $R$ is skew $\Pi$-Armendariz ring \cite[Theorem 3.10]{Reyes2018}. This result was generalized to skew PBW extensions over weak $(\Sigma,\Delta)$-compatible and NI rings as the following proposition shows.

\begin{proposition}\label{prop.producto.nil1}
    Let $A=\sigma(R)\langle x_1,\dots,x_n\rangle$ be a skew PBW extension over a weak $(\Sigma,\Delta)$-compatible and NI ring $R$.
    \begin{enumerate}
        \item [\rm (1)] \cite[Theorem 4.7]{ReyesSuarez2020}. If $f=\sum_{i=0}^m a_iX_i$ and $g=\sum_{j=0}^t b_jY_j$ are elements of $A$, then $fg\in N(A)$ if and only if $a_ib_j\in N(R)$, for all $i,j$.
        \item [\rm (2)] \cite[Theorem 4.9]{ReyesSuarez2020}. For every idempotent element $e\in R$ and a fixed $i$, we have $\delta_i(e)\in N(R)$ and $\sigma_i(e)=e+u$, where $u\in N(R)$.
    \end{enumerate}
\end{proposition}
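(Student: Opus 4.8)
The plan is to treat parts (1) and (2) separately, since they are logically independent. I would dispose of part (2) by a direct computation. If $e = e^{2}\in R$, then $e(1-e) = (1-e)e = 0 \in N(R)$. Applying weak $\Delta$-compatibility (Definition~\ref{def.weakcom}) with $\beta$ the $i$-th standard vector of $\mathbb{N}^{n}$, so that $\delta^{\beta} = \delta_{i}$, and using $\delta_{i}(1) = 0$ (hence $\delta_{i}(1-e) = -\delta_{i}(e)$), one gets $e\,\delta_{i}(e) \in N(R)$ and $(1-e)\,\delta_{i}(e) \in N(R)$; since $R$ is NI, $N(R)$ is an ideal, so $\delta_{i}(e) = e\,\delta_{i}(e) + (1-e)\,\delta_{i}(e) \in N(R)$. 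Likewise, weak $\Sigma$-compatibility applied to $e(1-e) = 0$ and to $(1-e)e = 0$, together with $\sigma_{i}(1) = 1$, gives $e - e\,\sigma_{i}(e) \in N(R)$ and $\sigma_{i}(e) - e\,\sigma_{i}(e) \in N(R)$; subtracting yields $e - \sigma_{i}(e) \in N(R)$, i.e.\ $\sigma_{i}(e) = e + u$ with $u := \sigma_{i}(e) - e \in N(R)$.

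For the ``if'' direction of part (1), I would expand $fg = \sum_{i,j} a_{i}X_{i}b_{j}Y_{j}$. Iterating the relation $x_{k}r = \sigma_{k}(r)x_{k} + \delta_{k}(r)$ to push each $b_{j}$ past $X_{i}$, and then reducing the resulting products $x^{\beta}Y_{j}$ of standard monomials by means of conditions (iii)--(iv) of Definition~\ref{def.skewpbwextensions}, one checks that every coefficient of $fg$ in standard form is a finite sum of terms $a_{i}\,\theta(b_{j})\,p$, where $\theta$ is a composition of maps taken from $\Sigma\cup\Delta$ and $p\in R$. Assuming $a_{i}b_{j}\in N(R)$ for all $i,j$: an induction on the length of $\theta$, using weak $\Sigma$- and weak $\Delta$-compatibility at each step (equivalently, Proposition~\ref{prop.weak}), gives $a_{i}\theta(b_{j})\in N(R)$, and then $a_{i}\theta(b_{j})p\in N(R)$ because $N(R)$ is a two-sided ideal. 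Hence every coefficient of $fg$ lies in $N(R)$, and $fg\in N(A)$ by Proposition~\ref{prop.nilp}.

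For the converse, the plan is to pass to a quotient. Taking $a=1$ in Definition~\ref{def.weakcom} shows that $\sigma_{i}(b)\in N(R) \iff b\in N(R)$ and that $b\in N(R)\Rightarrow\delta_{i}(b)\in N(R)$, so $N(R)$ is a $(\Sigma,\Delta)$-invariant ideal of $R$. Put $\overline{R} := R/N(R)$, which is reduced precisely because $R$ is NI, and endow it with the induced system $(\overline{\Sigma},\overline{\Delta})$. Then $\overline{R}$ is $(\overline{\Sigma},\overline{\Delta})$-compatible, since $\overline{a}\,\overline{\sigma}^{\alpha}(\overline{b}) = 0 \iff a\sigma^{\alpha}(b)\in N(R) \iff ab\in N(R) \iff \overline{a}\,\overline{b} = 0$, and similarly $\overline{a}\,\overline{b} = 0 \Rightarrow \overline{a}\,\overline{\delta}^{\beta}(\overline{b}) = 0$. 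Moreover $\overline{A} := A/N(R)A$ is a skew PBW extension over $\overline{R}$: following the construction in Proposition~\ref{prop.invariant}, one has $N(R)A\cap R = N(R)$ and $N(R)A = \bigoplus_{\alpha}N(R)x^{\alpha}$, the induced $\overline{\sigma_{i}}$ remain injective, and $\overline{d_{i,j}}$ remains central and invertible, so conditions (i)--(iv) of Definition~\ref{def.skewpbwextensions} hold for $\overline{A}$. By Proposition~\ref{prop.nilp} one has $N(A) = N(R)A$, so the hypothesis $fg\in N(A)$ becomes $\overline{f}\,\overline{g} = 0$ in $\overline{A}$; since $\overline{R}$ is reduced, hence reversible, \cite[Theorem~3.10]{Reyes2018} applies and tells us that $\overline{R}$ is skew $\Pi$-Armendariz with respect to $\overline{A}$, whence $\overline{a_{i}}\,\overline{b_{j}}\in N(\overline{R}) = \{0\}$, i.e.\ $a_{i}b_{j}\in N(R)$, for all $i,j$.

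The main obstacle is this converse direction. If one goes through \cite[Theorem~3.10]{Reyes2018} as above, the only delicate step is checking that $\overline{A} = A/N(R)A$ really is a skew PBW extension over $\overline{R}$: Proposition~\ref{prop.invariant}(2) is stated under the hypothesis $\sigma_{i}(N(R)) = N(R)$, while weak $\Sigma$-compatibility supplies only $\sigma_{i}(N(R))\subseteq N(R)$, so the freeness of $\overline{A}$ over $\overline{R}$ and conditions (iii)--(iv) have to be re-derived directly (they go through because $c_{i,r} = \sigma_{i}(r)$ and the $d_{i,j}$ are well behaved modulo $N(R)$). If instead one wants a proof not relying on \cite{Reyes2018}, the obstacle is the Armendariz-type extraction performed directly inside $A$: fixing a degree-lexicographic order on ${\rm Mon}(A)$, the coefficient of $x^{\alpha+\gamma}$ in $fg$, where $\mathrm{lm}(f) = x^{\alpha}$ and $\mathrm{lm}(g) = x^{\gamma}$, equals $a_{i_{0}}\,\sigma^{\alpha}(b_{j_{0}})\,u$ for a central unit $u$ (a product of the $d_{i,j}$), which forces $a_{i_{0}}b_{j_{0}}\in N(R)$ after cancelling $u$ and removing $\sigma$; one then peels the remaining pairs $(i,j)$ off in decreasing order of the attached monomial, discarding at each stage the contributions $a_{i}\theta(b_{j})$ already known to be nilpotent and separating the surviving sum $\sum a_{i}\sigma^{\bullet}(b_{j})\in N(R)$ into its individual summands by left-multiplying by suitable coefficients and invoking, in $R/N(R)$, reversibility, semicommutativity, and the implication $(ba)^{2}\in N(R)\Rightarrow ba\in N(R)$. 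The bookkeeping of this peeling over $n$ indeterminates is the technical heart of a self-contained argument.
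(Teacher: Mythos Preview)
The paper does not supply a proof of this proposition: both parts are quoted verbatim from \cite{ReyesSuarez2020} as established results, so there is nothing to compare your argument against inside the present paper. That said, your proof is correct and in fact mirrors the technique the paper itself deploys elsewhere. Your treatment of part~(2) is the standard direct computation, and your ``if'' half of part~(1) is the routine expansion-and-invariance argument.

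For the ``only if'' half, your passage to $\overline{R}=R/N(R)$ is exactly the reduction used in the paper's own Theorems~\ref{WeakTheorem4.5} and~\ref{th.units}, so you are in good company. The obstacle you flag---that Proposition~\ref{prop.invariant}(2) asks for $\sigma_i(N(R))=N(R)$ while weak $\Sigma$-compatibility only gives the inclusion---is real, and the paper glosses over it in Theorem~\ref{th.units} when it writes ``$\overline{A}=A/N(A)$ is a skew PBW extension over $\overline{R}$ by Proposition~\ref{prop.invariant}''. Your direct verification is the right fix: $(\Sigma,\Delta)$-invariance of $N(R)$ makes $N(R)A$ a two-sided ideal with $N(R)A=\bigoplus_\alpha N(R)x^\alpha$, the $\overline{d_{i,j}}$ stay central units, and the induced $\overline{\sigma_i}$ is injective because $\sigma_i(r)\in N(R)\Rightarrow 1\cdot\sigma_i(r)\in N(R)\Rightarrow r\in N(R)$ by weak $\Sigma$-compatibility, which secures condition~(iii) of Definition~\ref{def.skewpbwextensions}. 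Once $\overline{A}$ is a skew PBW extension over the reduced, $(\overline{\Sigma},\overline{\Delta})$-compatible ring $\overline{R}$, Proposition~\ref{prop.rigid.and.weak} makes $\overline{R}$ $\overline{\Sigma}$-rigid and $\overline{A}$ reduced, so $\overline{f}\,\overline{g}=0$ together with \cite[Theorem~3.10]{Reyes2018} (or any of the standard Armendariz-type results for rigid coefficient rings) yields $\overline{a_i}\,\overline{b_j}=0$, i.e.\ $a_ib_j\in N(R)$. Your alternative self-contained peeling argument would also work and is essentially how the cited result in \cite{ReyesSuarez2020} is proved.
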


\begin{proposition}[{\cite[Theorem 3.3]{SuarezChaconReyes2022}}]\label{th.NIiffNI}
Let $A=\sigma(R)\langle x_1,\dots,x_n\rangle$ be a skew PBW extension over a weak $(\Sigma,\Delta)$-compatible ring. $R$ is NI if and only if $A$ is NI. In this case, it is clear that $N(A) = N(R)A$.
\end{proposition}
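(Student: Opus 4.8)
The backbone of the argument is the standard fact that a ring $S$ is NI if and only if $N(S)$ is a two-sided ideal of $S$: if $N(S)$ is an ideal then it is a nil ideal, hence contained in $N^{*}(S)$, while $N^{*}(S)$ is always a nil ideal and hence contained in $N(S)$, so $N^{*}(S)=N(S)$; the converse is immediate. Thus both implications reduce to deciding when $N(A)$, respectively $N(R)$, is an ideal.

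For the implication from $A$ NI to $R$ NI, I would note that since $R$ is a subring of $A$ sharing the identity, an element of $R$ is nilpotent in $R$ exactly when it is nilpotent in $A$; hence $N(R)=N(A)\cap R$. If $N(A)$ is an ideal of $A$, then $N(A)\cap R$ is an ideal of $R$, so $N(R)$ is an ideal and $R$ is NI. No compatibility hypothesis is needed for this direction.

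For the converse, assume $R$ is NI, so that $N(R)$ is a two-sided ideal of $R$. By Proposition \ref{prop.nilp} (applicable precisely because $R$ is weak $(\Sigma,\Delta)$-compatible and NI), $N(A)$ is the set of all $f=\sum_{i=0}^{m}a_iX_i$ with $a_i\in N(R)$ for every $i$; in particular it is closed under addition because $N(R)$ is. To see that it is an ideal, I would invoke Proposition \ref{prop.producto.nil1}(1): given $g=\sum_j b_jY_j\in N(A)$ and an arbitrary $f=\sum_i a_iX_i\in A$, all the $b_j$ lie in $N(R)$, so the products $a_ib_j$ and $b_ja_i$ lie in the ideal $N(R)$ for all $i,j$, whence $fg,gf\in N(A)$. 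Therefore $N(A)$ is a two-sided ideal of $A$ and $A$ is NI. Finally, since $N(R)$ is a two-sided ideal of $R$ and $A$ is a free left $R$-module on $\mathrm{Mon}(A)$, expanding any element of $N(R)A$ in the standard basis (and using that $N(R)$ absorbs left multiplication by $R$) shows $N(R)A=\{\,\sum a_iX_i : a_i\in N(R)\,\}$, which is exactly the set just identified with $N(A)$; hence $N(A)=N(R)A$. Alternatively, one can first check via Proposition \ref{prop.weak}(1) and (4), applied with the value $1$ in the appropriate slot, that $N(R)$ is $(\Sigma,\Delta)$-invariant, and then deduce that $N(R)A$ is an ideal of $A$ directly from Proposition \ref{prop.invariant}(1).

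I do not expect a genuine obstacle here: the substantive input is entirely imported from Propositions \ref{prop.nilp} and \ref{prop.producto.nil1}. The one point worth attention is that, in the forward direction, one must not shortcut via Proposition \ref{prop.N.invariante}, whose hypothesis is that $A$ is NI, which is precisely what is being proved; and one should check that Propositions \ref{prop.nilp} and \ref{prop.producto.nil1} are legitimately available, since in that direction their standing assumption that $R$ is NI is in force. Everything remaining is bookkeeping about the free-module structure of $A$ over $R$.
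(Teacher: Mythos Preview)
The paper does not supply its own proof of this proposition; it is quoted from \cite[Theorem 3.3]{SuarezChaconReyes2022} without argument, so there is nothing here to compare your proof against. Your argument is correct and is the natural one: the implication $A$ NI $\Rightarrow$ $R$ NI is immediate from $N(R)=N(A)\cap R$, while for the converse the coefficientwise description of $N(A)$ in Proposition~\ref{prop.nilp} together with Proposition~\ref{prop.producto.nil1}(1) (or, as you note, the $(\Sigma,\Delta)$-invariance of $N(R)$ obtained from Proposition~\ref{prop.weak} and then Proposition~\ref{prop.invariant}(1)) shows that $N(A)=N(R)A$ is a two-sided ideal. Your caution about not invoking Proposition~\ref{prop.N.invariante} circularly is well placed.
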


\section{Elements, Gelfand and strongly harmonic rings}\label{originalresults}

\subsection{Von Neumman regular and clean elements}\label{Elements} 

Following Lam \cite{Lam1991}, for a ring $R$, an element $a\in R$ is called {\em von Neumann regular} if there exists an element $r\in R$ with $a = ara$. The element $a\in R$ is said to be a $\pi$-{\em regular element} of $R$ if $a^{m}ra^{m} = a^{m}$, for some $r\in R$ and $m\ge 1$. We consider the set of idempotent elements ${\rm Idem}(R)$, the set of von Neumann regular elements ${\rm vnr}(R)$, and the set of $\pi$-regular elements $\pi-r(R)$. It is clear that ${\rm Idem}(R)\subseteq {\rm vnr}(R)\subseteq \pi - r(R)$. Additionally, a ring $R$ is called {\em von Neumann regular}, if the equality ${\rm vnr}(R) = R$ holds. If $\pi-r(R) = R$, then $R$ is said to be $\pi$-{\em regular}. Finally, $R$ is called {\em Boolean}, whenever ${\rm Idem}(R) = R$. In this way, the implications Boolean $\Rightarrow $ von Neumann regular $\Rightarrow$ $\pi$-regular hold.

\medskip

Contessa \cite{Contessa1984} introduced the notion of von Neumann local. An element $a\in R$ is called a {\em von Neumann local element}, if either $a\in {\rm vnr}(R)$ or $1-a\in {\rm vnr}(R)$. Following Nicholson \cite{Nicholson1977}, an element $a\in R$ is a {\em clean element}, if $a$ is the sum of a unit and an idempotent of $R$. Let ${\rm vnl}(R)$ be the set of von Neumann local elements and ${\rm cln}(R)$ the set of clean elements. If ${\rm cln}(R) = R$, then $R$ is called a {\em clean ring} \cite{Nicholson1977}. Examples of clean rings are the exchange rings and semiperfect rings. Several characterizations of clean elements have been established by different authors (see \cite{KanwarLeroyMatczuk2015} and \cite{NicholsonZhou2005}). Finally, if ${\rm vnl}(R) = R$, we say that $R$ a {\em von Neumann local} ring \cite{Hashemi}. 

\medskip

In this section, we characterize types of elements of a skew PBW extension over a weak $(\Sigma,\Delta)$-compatible and NI ring. We formulate analogue results to the
obtained for the case of skew polynomial rings \cite{Hashemi}, and the skew PBW extensions over right duo rings \cite{Hamidizadehetal2020}. 

\medskip

Proposition \ref{WeakTheorem4.5} generalizes \cite[Theorem 4.5]{Hamidizadehetal2020}.

\begin{proposition}\label{WeakTheorem4.5}
Let $A = \sigma(R)\left \langle x_1, \dots , x_n \right \rangle$ be a skew PBW extension over a weak $(\Sigma,\Delta)$-compatible and $NI$ ring $R$, and consider $f = \sum_{i=0}^la_iX_i$ and $g = \sum_{j=0}^mb_jY_j $ non-zero elements of $A$ such that $fg = c \in R$. If $b_0$ is a unit of $R$, then $a_1, a_2,\dots,a_l$ are  nilpotent elements of $R$.
\end{proposition}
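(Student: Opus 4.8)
The plan is to reduce the statement to the reduced case and then run a leading–term induction on the equation $fg=c$.

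\textit{Reduction.} If $c\in N(R)$, then $fg\in N(A)$, so Proposition~\ref{prop.producto.nil1}(1) gives $a_ib_j\in N(R)$ for all $i,j$; in particular $a_ib_0\in N(R)$, and since $b_0\in U(R)$ and $N(R)$ is an ideal of $R$ (because $R$ is NI) we get $a_i\in N(R)$ for every $i$, which proves the proposition in this case. So assume $c\notin N(R)$. Since $R$ is NI and weak $(\Sigma,\Delta)$-compatible, $A$ is NI with $N(A)=N(R)A$ (Proposition~\ref{th.NIiffNI}) and $N(R)$ is $(\Sigma,\Delta)$-invariant (Proposition~\ref{prop.N.invariante}). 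As each $\sigma_i$ is injective, $\sigma_i(r)\in N(R)$ forces $r\in N(R)$, so the induced endomorphism $\overline{\sigma_i}$ of $\overline{R}:=R/N(R)$ is again injective; hence $\overline{A}:=A/N(A)=A/N(R)A$ is a skew PBW extension over $\overline{R}$, and it is reduced because $A$ is NI. By Proposition~\ref{prop.rigid.and.weak}, $\overline{R}$ is $\overline{\Sigma}$-rigid, so it is reduced and, by Proposition~\ref{prop.weak} applied with $N(\overline{R})=\{0\}$, $(\overline{\Sigma},\overline{\Delta})$-compatible in the strong sense. Since $N(A)=N(R)A$, it suffices to show that the images of $a_1,\dots,a_l$ vanish in $\overline{R}$, so from now on I would work in $\overline{A}$, keeping the symbols $f,g,c,a_i,b_j$ for their images.

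\textit{The induction in $\overline{A}$.} Write $X_i=x^{\alpha_i}$, $Y_j=x^{\beta_j}$ with $\alpha_i,\beta_j\in\mathbb{N}^n$, and fix a monomial order on $\mathrm{Mon}(A)$. Every monomial occurring in the standard expansion of $X_ib_jY_j$ is $\preceq x^{\alpha_i+\beta_j}$, the coefficient of $x^{\alpha_i+\beta_j}$ being $\overline{\sigma}^{\alpha_i}(b_j)\,\theta_{ij}$ with $\theta_{ij}$ a central unit of $\overline{R}$ (here the hypothesis that the $d_{i,j}$ are central and invertible is used), and every coefficient arising in that expansion is a left $\overline{R}$-multiple of some $\overline{\sigma}$-$\overline{\delta}$-image of $b_j$. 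For $\gamma\in\mathbb{N}^n$ put $P_{\gamma}=\{(i,j):\alpha_i+\beta_j=\gamma\}$; distinct members of $P_{\gamma}$ have distinct first and distinct second indices. I would prove, by descending induction on $x^{\gamma}$, that $a_ib_j=0$ whenever $\alpha_i+\beta_j=\gamma\neq 0$. In the inductive step, a pair $(i,j)\notin P_{\gamma}$ can contribute to the coefficient of $x^{\gamma}$ in $fg$ only when $x^{\alpha_i+\beta_j}\succ x^{\gamma}$, and for such a pair $a_ib_j=0$ is already known, so $(\overline{\Sigma},\overline{\Delta})$-compatibility kills its entire contribution. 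Comparing coefficients of $x^{\gamma}$ on both sides of $fg=c$ therefore yields $\sum_{(i,j)\in P_{\gamma}}a_i\,\overline{\sigma}^{\alpha_i}(b_j)\,\theta_{ij}=0$. To peel this apart, order $P_{\gamma}$ so that $x^{\alpha_i}$ decreases and let $(i_1,j_1)$ be its first pair. Left-multiplying the relation by $a_{i_1}$: for any other $(i,j)\in P_{\gamma}$ one has $x^{\alpha_{i_1}}\succ x^{\alpha_i}$, hence $x^{\alpha_{i_1}+\beta_j}\succ x^{\gamma}$, so $a_{i_1}b_j=0$ has already been obtained; using that $\overline{A}$ is reduced (hence reversible and symmetric) together with $(\overline{\Sigma},\overline{\Delta})$-compatibility gives $a_{i_1}a_i\,\overline{\sigma}^{\alpha_i}(b_j)=0$ for all those pairs, so what survives is $a_{i_1}^{2}\,\overline{\sigma}^{\alpha_{i_1}}(b_{j_1})=0$, whence $a_{i_1}b_{j_1}=0$ (again since $\overline{A}$ is reduced and $\overline{R}$ is weak $\overline{\Sigma}$-compatible). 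Deleting this term and iterating over $P_{\gamma}$ gives $a_ib_j=0$ for all $(i,j)\in P_{\gamma}$, closing the induction. Applying this to the pairs $(X_i,1)$ with $i\ge 1$ (for which $\gamma=\alpha_i\neq 0$) yields $a_ib_0=0$, and since $b_0$ is a unit of $\overline{R}$ we conclude $a_i=0$ for $1\le i\le l$, as required.

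\textit{Main obstacle.} The delicate part is the coefficient bookkeeping: one must verify that for a general skew PBW extension the only genuinely new term in the coefficient of $x^{\gamma}$ in $fg$ is $\sum_{(i,j)\in P_{\gamma}}a_i\overline{\sigma}^{\alpha_i}(b_j)\theta_{ij}$ — every remaining contribution, produced by the commutation rules in Definition~\ref{def.skewpbwextensions}(iii)--(iv) and involving $\overline{\sigma}$-$\overline{\delta}$-combinations of the $b_j$'s together with the structure constants $d_{i,j}$ and $r_k^{(i,j)}$, comes from a pair already known to annihilate — and that the two nested inductions (over $\gamma$, and over the ordered pairs of $P_{\gamma}$) can be organized so that at each stage the previously proved relations are exactly what is needed to cancel those contributions. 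The degenerate sub-cases, such as $f\in R$ or coefficients $b_j$ that become zero in $\overline{R}$, are routine.
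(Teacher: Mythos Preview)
Your proof is correct and follows the same overall route as the paper: pass to $\overline{R}=R/N(R)$, observe that $\overline{R}$ is reduced and $\overline{\Sigma}$-rigid, and deduce $\overline{a_i}\,\overline{b_0}=0$ for $i\ge 1$. The difference is only in how the key step is justified: the paper invokes \cite[Proposition~4.2]{Hamidizadehetal2020} as a black box to get $\overline{a_i}\,\overline{b_j}=0$ for all $i+j\ge 1$ over a $\overline{\Sigma}$-rigid coefficient ring, whereas you reprove exactly that statement by the descending leading-term induction over $\mathrm{Mon}(\overline{A})$. Two minor remarks: the case split on whether $c\in N(R)$ is unnecessary, since $\overline{c}=0$ is covered by the general argument; and the lower-order coefficients in the expansion of $X_ib_jY_j$ are sums of \emph{right} $\overline{R}$-multiples of $\overline{\sigma}$--$\overline{\delta}$-images of $b_j$ (not left multiples), but this does not matter, since what you actually use is that $a_i$ kills each such image once $a_ib_j=0$, by $(\overline{\Sigma},\overline{\Delta})$-compatibility.
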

\begin{proof}
Assume that $b_0$ is a unit element of $R$. Let us show that $a_1, a_2,\dots,a_l$ are all nilpotent. Since $R$ is NI ring and weak $(\Sigma, \Delta)$-compatible, we have $N(R)$ is a $(\Sigma, \Delta)$-invariant ideal of $R$. Hence $\overline{R} = R/N(R)$ is a reduced ring and also weak $(\overline{\Sigma}, \overline{\Delta})$-compatible. By Proposition \ref{prop.rigid.and.weak}, $\overline{R}$ is a $\overline{\Sigma}$-rigid ring. Since $fg = c \in R$, we have  $\overline{f}\overline{g} = \overline{c}$ in $\sigma(\overline{R}) \left \langle x_1, \dots, x_n \right \rangle$, and hence  $\overline{a_0}\overline{b_0} = \overline{c}$ and $\overline{a_i}\overline{b_j} = \overline{0}$, for each $i + j \geq 1$, by \cite[Proposition 4.2]{Hamidizadehetal2020}. Therefore, we get $\overline{a_i} = 0$ for each $i \geq 1$, since $b_0$ is a unit, whence $a_i$ is nilpotent for every $i \geq 1$.
\end{proof}


We establish the following characterization of the units of a skew PBW extension. Theorem \ref{th.units} generalizes \cite[Theorem 4.7]{Hamidizadehetal2020}.

\begin{theorem}\label{th.units} If $A=\sigma(R)\langle x_1,\dots,x_n\rangle$ is a skew PBW extension over a weak $(\Sigma,\Delta)$-compatible and NI ring $R$, then an element $f=\sum_{i=0}^ma_iX_i\in A$ is a unit of $A$ if and only if $a_0$ is a unit of $R$ and $a_i$ is nilpotent, for every $1 \le i \le m$.  
\end{theorem}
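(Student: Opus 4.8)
The plan is to reduce to the reduced case via the quotient $\overline{R} = R/N(R)$, exactly as in Proposition \ref{WeakTheorem4.5}, and then patch the two implications back together using the nilpotency information available from Propositions \ref{prop.nilp} and \ref{prop.producto.nil1}. For the "only if" direction: suppose $f = \sum_{i=0}^m a_i X_i$ is a unit with inverse $g = \sum_{j=0}^t b_j Y_j$, so $fg = gf = 1$. Since $R$ is NI and weak $(\Sigma,\Delta)$-compatible, $N(R)$ is a $(\Sigma,\Delta)$-invariant ideal (Proposition \ref{prop.N.invariante}), so $\overline{R}$ is reduced and weak $(\overline{\Sigma},\overline{\Delta})$-compatible, hence $\overline{\Sigma}$-rigid by Proposition \ref{prop.rigid.and.weak}, and $A/N(R)A \cong \sigma(\overline{R})\langle x_1,\dots,x_n\rangle$ is reduced. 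Passing to $\overline{A}$, the element $\overline{f}$ is a unit there, and by the known description of units of skew PBW extensions over $\Sigma$-rigid (equivalently reduced) rings — i.e. \cite[Theorem 4.7]{Hamidizadehetal2020} applied in the reduced/rigid case, or the Armendariz-type argument \cite[Proposition 4.2]{Hamidizadehetal2020} — we get $\overline{a_0}$ a unit of $\overline{R}$ and $\overline{a_i} = 0$ for $i \ge 1$. Thus each $a_i \in N(R)$ for $i \ge 1$, and $a_0 + N(R)$ is a unit of $\overline{R}$; since $N(R) \subseteq J(R)$ in an NI ring, this lifts to give $a_0 \in U(R)$.

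For the "if" direction: assume $a_0 \in U(R)$ and $a_1,\dots,a_m \in N(R)$. Write $f = a_0 + h$ where $h = \sum_{i=1}^m a_i X_i$. By Proposition \ref{prop.nilp} (all coefficients $a_1,\dots,a_m$ lie in $N(R)$, and the constant term of $h$ is $0 \in N(R)$), $h \in N(A)$, i.e. $h$ is a nilpotent element of $A$. Then $f = a_0(1 + a_0^{-1}h)$; here $a_0^{-1}h$ need not itself be nilpotent in a noncommutative ring, so the naive geometric-series argument must be handled with a little care. I would instead argue: $h \in N(A) = N(R)A$ and $N(R)A$ is an ideal of $A$ (Proposition \ref{th.NIiffNI} together with Proposition \ref{prop.invariant}(1)), and moreover $N(R)A \subseteq J(A)$ — this last containment is the key point and follows because $A/N(R)A$ is reduced hence semiprime, combined with the fact that $N(R)A$ is a nil...

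Actually the cleaner route, and the one I would carry out: show directly that $N(R)A \subseteq J(A)$. Since $N(A) = N(R)A$ and $A$ is NI, $N(A)$ is the upper nilradical of $A$; one checks that for any $h \in N(R)A$ and any $a \in A$, $1 - ah$ is a unit of $A$ — for this, quotient out by $N(R)A$ to see $1 - ah \equiv 1$ is a unit in the reduced ring $\overline{A}$, then lift the inverse: if $(1-ah)u = 1 - v$ with $v \in N(R)A$, and $v$ is genuinely nilpotent in $A$ (true by Proposition \ref{prop.nilp} since its coefficients are in $N(R)$), then $1 - v$ is a unit of $A$, so $1 - ah$ is right invertible, and symmetrically left invertible. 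Hence $N(R)A \subseteq J(A)$. Then $f = a_0 + h$ with $a_0 \in U(R) \subseteq U(A)$ and $h \in J(A)$ gives $f \in U(A)$ immediately.

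The main obstacle I anticipate is precisely the noncommutativity in the "if" direction: one cannot simply say "$a_0^{-1}h$ is nilpotent so $1 + a_0^{-1}h$ is a unit." The fix is to establish $N(R)A = N(A) \subseteq J(A)$ as a genuine ideal-level statement — which is where Propositions \ref{prop.nilp}, \ref{th.NIiffNI}, and the reducedness of $A/N(R)A$ (via Proposition \ref{prop.rigid.and.weak}) do the real work — rather than manipulating individual elements. Everything else (the reduction mod $N(R)$, invoking the rigid/reduced case of the Hamidizadeh et al. result, lifting units along $N(R) \subseteq J(R)$) is routine.
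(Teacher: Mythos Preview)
Your proposal is correct and follows essentially the same route as the paper: reduce modulo $N(R)$, invoke the rigid/reduced case (via \cite[Proposition 4.2]{Hamidizadehetal2020}), and then lift. The only difference is that you overcomplicate the ``if'' direction. Your worry that ``$a_0^{-1}h$ need not itself be nilpotent in a noncommutative ring'' is misplaced here: by Proposition \ref{th.NIiffNI} the extension $A$ is NI, so $N(A)$ is a two-sided ideal and $a_0^{-1}h \in N(A)$ \emph{is} nilpotent. More to the point, the paper simply observes that in any NI ring the set of nilpotents equals the upper nilradical, which is always contained in the Jacobson radical; hence $h \in N(A) \subseteq J(A)$ immediately, and $f = a_0 + h$ is a unit. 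Your explicit lifting argument to show $N(R)A \subseteq J(A)$ is valid but reproves this standard containment by hand.
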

\begin{proof}
Suppose that $R$ is a weak $(\Sigma, \Delta)$-compatible and NI ring. This implies that $N(R)$ is a $(\Sigma, \Delta)$-invariant ideal of $R$, whence $\overline{R} = R/N(R)$ is reduced and weak $(\overline{\Sigma},\overline{\Delta})$-compatible. Proposition \ref{prop.rigid.and.weak} implies that $\overline{R}$ is $\overline{\Sigma}$-rigid, and $\overline{A}=A/N(A)$ is a skew PBW extension over $\overline{R}$ by Proposition \ref{prop.invariant}. 

Consider $f = \sum_{i=0}^la_iX_i$ a unit element of $A$. There exists $g=\sum_{j=0}^mb_jY_j \in A$ such that $fg = gf = 1$, which implies that $\overline{f}\overline{g}=\overline{g}\overline{f}= \overline{1}$ in $\overline{A}$, and so $\overline{a_0}\overline{b_0}=\overline{b_0}\overline{a_0}=\overline{1}$ and $\overline{a_i}\overline{b_j}=0$, for each $i+j\geq 1$ by \cite[Proposition 4.2]{Hamidizadehetal2020}. Then $\overline{a_0}$ and $\overline{b_0}$ are units of $\overline{R}$ and $a_1, \dotsc, a_l \in N(R)$. Since $N(R) \subseteq J(R)$ and $\overline{a_0}$ is a unit element of $\overline{R}$, we have that $a_0 \in U(R)$.

Conversely, let $a_0$ be a unit element and $a_1, \dotsc, a_l$ be nilpotent elements of $R$. Then $\sum_{i=1}^la_iX_i \in N(A)$ by Proposition \ref{prop.nilp}. Also, we get $N(A) \subseteq J(A)$ since $A$ is NI, and so $\sum_{i=1}^la_iX_i \in J(A)$. Therefore, we have $f =\sum_{i=0}^la_iX_i$ is a unit element of $A$.  
\end{proof}

As a consequence of the characterization of the units and nilpotent elements of a skew PBW extension over weak $(\Sigma, \Delta)$-compatible rings, we obtain Corollary \ref{WeakCorollary4.8} and Proposition \ref{th.NJ}.

\begin{corollary}\label{WeakCorollary4.8}
If $A=\sigma(R)\langle x_1,\dots,x_n\rangle$ is a skew PBW extension over a weak $(\Sigma,\Delta)$-compatible and NI ring $R$, then $U(A)=U(R)+N(R)A$.
\end{corollary}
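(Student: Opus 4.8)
The plan is to prove the set equality by establishing the two inclusions $U(A)\subseteq U(R)+N(R)A$ and $U(R)+N(R)A\subseteq U(A)$ separately, using Theorem \ref{th.units} as the main engine, together with the identification $N(A)=N(R)A$ from Proposition \ref{th.NIiffNI} and the standard facts that a nil ideal is contained in the Jacobson radical and that a unit plus an element of the Jacobson radical is again a unit.

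For the forward inclusion, I would take $f=\sum_{i=0}^m a_iX_i\in U(A)$. Theorem \ref{th.units} gives that $a_0$ is a unit of $R$ and that $a_i\in N(R)$ for every $1\le i\le m$. Writing $f=a_0+\sum_{i=1}^m a_iX_i$, the second summand lies in $N(R)A$ since all of its coefficients are nilpotent (equivalently, $\sum_{i=1}^m a_iX_i\in N(A)=N(R)A$ by Propositions \ref{prop.nilp} and \ref{th.NIiffNI}). Hence $f\in U(R)+N(R)A$.

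For the reverse inclusion, I would take $f=u+h$ with $u\in U(R)$ and $h\in N(R)A$. Since $R$ is NI and weak $(\Sigma,\Delta)$-compatible, $A$ is NI and $N(A)=N(R)A$ by Proposition \ref{th.NIiffNI}, so $h\in N(A)$; by Proposition \ref{prop.nilp} we may write $h=\sum_{i=0}^m c_iX_i$ with $X_0=1$ and $c_i\in N(R)$ for all $i$. Then $f=(u+c_0)+\sum_{i=1}^m c_iX_i$. Because $c_0\in N(R)\subseteq J(R)$ and $u\in U(R)$, the constant term $u+c_0$ is a unit of $R$, while the remaining coefficients $c_1,\dots,c_m$ are nilpotent, so Theorem \ref{th.units} yields $f\in U(A)$. (Alternatively, one can argue directly that $u\in U(R)\subseteq U(A)$ and $h\in N(A)\subseteq J(A)$, whence $f=u+h\in U(A)$.)

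I do not anticipate a serious obstacle here, as the statement is essentially a repackaging of Theorem \ref{th.units}. The only point requiring a little care is the justification that every element of $N(R)A$ has all of its coefficients with respect to the basis ${\rm Mon}(A)$ lying in $N(R)$; this is exactly the identification $N(A)=N(R)A$ combined with Proposition \ref{prop.nilp}, and it rests ultimately on $N(R)$ being a $(\Sigma,\Delta)$-invariant ideal (Proposition \ref{prop.N.invariante}), so that $N(R)A$ is a two-sided ideal of $A$ by Proposition \ref{prop.invariant}.
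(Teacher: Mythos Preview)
Your proof is correct and is exactly the intended argument: the paper records this as an immediate corollary of Theorem~\ref{th.units} without further proof, and you have simply spelled out the two inclusions that Theorem~\ref{th.units} yields. The only remark is that for the reverse inclusion you do not even need to invoke $N(A)=N(R)A$: since $N(R)$ is an ideal of $R$ and $A$ is a free left $R$-module on ${\rm Mon}(A)$, any element of $N(R)A$ already has all its coefficients in $N(R)$ directly.
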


\begin{proposition}\label{th.NJ}
If $A=\sigma(R)\langle x_1,\dots,x_n\rangle$ is a skew PBW extension over a weak $(\Sigma,\Delta)$-compatible and NI ring $R$, then $A$ is NJ.
\end{proposition}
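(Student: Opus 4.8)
The plan is to prove the two inclusions $N(A)\subseteq J(A)$ and $J(A)\subseteq N(A)$, the first being routine and the second requiring an Amitsur-type argument. For $N(A)\subseteq J(A)$, I would note that by Proposition~\ref{th.NIiffNI} the extension $A$ is NI (since $R$ is weak $(\Sigma,\Delta)$-compatible and NI), so $N(A)$ is a nil ideal of $A$; every nil ideal lies in the Jacobson radical, hence $N(A)\subseteq J(A)$. The same remark yields $N(R)\subseteq J(R)$.

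For $J(A)\subseteq N(A)$, the subtlety is that $R$ itself need not be NJ (e.g.\ $R=\mathbb{Z}_{(p)}$ is reduced but not semiprimitive), so one cannot simply reduce to the coefficient ring; the indeterminates must be used. Let $f=\sum_{i=0}^{m}a_iX_i\in J(A)$. Since $J(A)$ is a two-sided ideal, $1+f\in U(A)$; writing $1+f=(1+a_0)+\sum_{i=1}^{m}a_iX_i$ and applying Theorem~\ref{th.units}, we obtain $a_i\in N(R)$ for every $1\le i\le m$ (and $1+a_0\in U(R)$, which we do not need). Consequently $\sum_{i=1}^{m}a_iX_i\in N(R)A=N(A)$ by Proposition~\ref{th.NIiffNI}, so $a_0=f-\sum_{i=1}^{m}a_iX_i$ lies in $J(A)$, and hence in $J(A)\cap R$.

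It remains to show $a_0\in N(R)$. Because $J(A)$ is an ideal and $a_0\in J(A)\cap R$, we have $x_1a_0\in J(A)$, hence $1+x_1a_0\in U(A)$. By Proposition~\ref{sigmadefinition}, $x_1a_0=\sigma_1(a_0)x_1+\delta_1(a_0)$, so $1+x_1a_0=\bigl(1+\delta_1(a_0)\bigr)+\sigma_1(a_0)x_1$, and Theorem~\ref{th.units} forces $\sigma_1(a_0)\in N(R)$. Taking $a=1$, $b=a_0$ and $\alpha=(1,0,\dots,0)$ in the definition of weak $\Sigma$-compatibility (Definition~\ref{def.weakcom}) then gives $a_0\in N(R)$. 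Thus all coefficients of $f$ are nilpotent, so $f\in N(R)A=N(A)$ by Proposition~\ref{th.NIiffNI} (or Proposition~\ref{prop.nilp}), which proves $J(A)\subseteq N(A)$ and therefore $J(A)=N(A)$, i.e., $A$ is NJ.

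The main obstacle is conceptual rather than computational: one must resist deducing the statement from an (in general false) claim about $R$, and instead realise that multiplying an element of $J(A)\cap R$ by the indeterminate $x_1$ and feeding the product into the unit characterisation of Theorem~\ref{th.units} is exactly what promotes such an element from $J(R)$ into $N(R)$. Everything else is bookkeeping with the already-available descriptions of the units and of the nilpotent elements of $A$.
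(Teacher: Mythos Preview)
Your proof is correct and rests on the same idea as the paper's: push the troublesome coefficient into a non-constant position by multiplying by an indeterminate, then invoke Theorem~\ref{th.units}. The paper carries this out in a single stroke: for $f\in J(A)$ it considers $1+fx_n$, whose constant term is $1$ and whose remaining coefficients are exactly $a_0,a_1,\dots,a_m$ (each $X_ix_n$ is already a standard monomial, so no rewriting is needed), and one application of Theorem~\ref{th.units} then yields $a_i\in N(R)$ for \emph{all} $i$, including $i=0$. Your two-step variant---first $1+f$ to get $a_1,\dots,a_m\in N(R)$, then $1+x_1a_0$ to handle $a_0$---works equally well, at the mild cost of an extra appeal to weak $\Sigma$-compatibility to pass from $\sigma_1(a_0)\in N(R)$ back to $a_0\in N(R)$; the paper's right-multiplication by $x_n$ avoids that detour.
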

\begin{proof}
Since $A$ is a NI ring, then $N(A)\subseteq J(A)$ by Proposition \ref{th.NIiffNI}. Additionally, if $f$ is an element of $J(A)$ with $f=\sum_{i=0}^ma_iX_i$, we obtain $1+fx_n=1+\sum_{i=0}^ma_iX_ix_n$ is a unit element of $A$. Theorem \ref{th.units} shows that the coefficients $a_0,a_1,\dots,a_m\in N(R)$, and hence $f\in N(A)$ by Proposition \ref{prop.nilp}. Thus, we conclude $N(A)=J(A)$.
\end{proof}

About idempotent elements of skew PBW extensions over weak $(\Sigma, \Delta)$-compatible NI rings, the next result generalizes \cite[Theorem 4.9]{Hamidizadehetal2020}.

\begin{theorem}\label{theoremidem}
Let $A = \sigma(R)\left \langle x_1, \dots , x_n \right \rangle$ be a skew PBW extension over a weak $(\Sigma, \Delta)$-compatible and $NI$ ring $R$. If $f = \sum_{i=0}^la_iX_i$ is an idempotent element of $A$, then $a_i \in N(R)$, for each $1 \leq i \leq l$, and there exists an idempotent element $e \in R$ such that $\overline{a_0} = \overline{e} \in R/N(R)$.
\end{theorem}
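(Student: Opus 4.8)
The plan is to project the relation $f^{2}=f$ onto the reduced quotient $\overline{R}:=R/N(R)$, read off the resulting relations among the coefficients of $f$ from the product rule for skew PBW extensions over weak compatible NI rings, and then finish with a standard idempotent-lifting argument.

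First I would fix the structural preliminaries. By the NI hypothesis $N(R)$ is a two-sided nil ideal, so $\overline{R}=R/N(R)$ is reduced (any element of $R$ mapping to a nilpotent of $\overline{R}$ is itself nilpotent, hence in $N(R)$). By Proposition \ref{prop.N.invariante}, $N(R)$ is also $(\Sigma,\Delta)$-invariant, so Proposition \ref{prop.producto.nil1}(1) applies to $A$ over $R$. I would also record the elementary fact, valid in any ring, that for $u,v\in R$ the product $uv$ is nilpotent if and only if $vu$ is.

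Now write $f=\sum_{i=0}^{l}a_{i}X_{i}$ with $X_{0}=1$, and put $g:=f-1$, whose coefficients are $b_{0}=a_{0}-1$ and $b_{j}=a_{j}$ for $1\le j\le l$. Idempotency of $f$ gives $fg=f^{2}-f=0\in N(A)$, so Proposition \ref{prop.producto.nil1}(1) yields $a_{i}b_{j}\in N(R)$ for all $i,j$. The pair $(i,j)=(0,0)$ gives $a_{0}^{2}-a_{0}=a_{0}b_{0}\in N(R)$, so $\overline{a_{0}}$ is an idempotent of $\overline{R}$. Now fix $i$ with $1\le i\le l$: the pair $(0,i)$ gives $a_{0}a_{i}\in N(R)$, whence $a_{i}a_{0}$ is nilpotent, so $a_{i}a_{0}\in N(R)$; and the pair $(i,0)$ gives $a_{i}a_{0}-a_{i}=a_{i}b_{0}\in N(R)$. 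Subtracting these two members of the ideal $N(R)$ yields $a_{i}\in N(R)$, which is the first assertion. (Equivalently, one may first pass to the reduced skew PBW extension $\overline{A}=A/N(A)$ over $\overline{R}$, available by Propositions \ref{prop.invariant} and \ref{th.NIiffNI}, and argue inside $\overline{A}$ with \cite[Proposition 4.2]{Hamidizadehetal2020}, using that reduced rings are reversible.)

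Finally, since $N(R)$ is a nil ideal, idempotents lift modulo $N(R)$; applying this to the idempotent $\overline{a_{0}}$ of $\overline{R}$ produces an idempotent $e\in R$ with $e-a_{0}\in N(R)$, that is, $\overline{a_{0}}=\overline{e}$ in $R/N(R)$, as required. I expect the only delicate points to be the structural ones — checking that $N(R)$ is a nil $(\Sigma,\Delta)$-invariant ideal so that Proposition \ref{prop.producto.nil1}(1) is legitimately applicable, and invoking the classical lifting of idempotents modulo a nil ideal; the coefficient computation and the cancellation that kills $a_{1},\dots,a_{l}$ are then routine.
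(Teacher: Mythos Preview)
Your proof is correct and follows essentially the same route as the paper: set $g=1-f$ (you take $g=f-1$, which is immaterial), observe $fg=0\in N(A)$, apply Proposition~\ref{prop.producto.nil1}(1) to extract the coefficientwise relations in $N(R)$, and then lift the idempotent $\overline{a_0}$ modulo the nil ideal $N(R)$ via \cite[Theorem 21.28]{Lam1991}. The only cosmetic difference is that the paper reads off $a_i^2\in N(R)$ directly from the diagonal pair $(i,i)$, whereas you combine the pairs $(0,i)$ and $(i,0)$ with the $uv\leftrightarrow vu$ nilpotence trick and a subtraction; both routes are equally short.
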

\begin{proof}
Let $f = \sum_{i=0}^la_iX_i$ be an idempotent element of $A$ and consider the element $g=1-f=(1-a_0)-\sum_{i=1}^la_iX_i$. If $f$ is an idempotent, then $fg=0\in N(R)$. Hence, by Proposition \ref{prop.producto.nil1} (1), we have $a_ia_i=a_i^2 \in N(R)$ for $1 \le i \le l$ and $a_0(1-a_0)\in N(R)$. The former means that $a_i\in N(R)$ for $i\geq 1$, and the last assertion implies that $a_0-a_0^2\in N(R)$. By \cite[Theorem 21.28]{Lam1991}, there exists an idempotent $e\in R$ such that $a_0-e\in N(R)$, that is, $\overline{a_0} = \overline{e} \in R/N(R)$.
\end{proof}

Before studying the idempotent elements, we present two preliminary results that are used in the proof of the theorem that describes of these elements in skew PBW extensions over Abelian NI rings.

\begin{lemma}\label{lemaidem}
Let $R$ be any ring, $f,e\in {\rm Idem}(R)$  and $s\in N(R)$. If $f = e+s$ and $es = se$, then $s=0$. 
\end{lemma}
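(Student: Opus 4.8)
The statement says: if $f, e$ are idempotents, $s$ is nilpotent, $f = e + s$ and $e$ commutes with $s$, then $s = 0$. The plan is to exploit the commutativity of $e$ and $s$ to reduce everything to computations inside the commutative subring they generate, and then expand $f^2 = f$.

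First I would write $f = e + s$ and use $f^2 = f$ together with $es = se$. Expanding, $f^2 = e^2 + es + se + s^2 = e + 2es + s^2$, so $f = f^2$ gives $e + s = e + 2es + s^2$, hence
\[
s = 2es + s^2, \qquad\text{equivalently}\qquad s(1 - s) = 2es.
\]
Next I would multiply this relation on the left by $e$: since $e^2 = e$ and $e$ commutes with $s$, we get $es(1-s) = 2es$, i.e. $es - es^2 = 2es$, so $es + es^2 = 0$, that is $es(1 + s) = 0$. Because $s$ is nilpotent, $1 + s$ is a unit, so $es = 0$. Plugging $es = 0$ back into $s = 2es + s^2$ yields $s = s^2$. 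But $s$ is both idempotent and nilpotent: from $s = s^2$ we get $s = s^2 = s^3 = \cdots = s^k$ for all $k \geq 1$, and choosing $k$ with $s^k = 0$ forces $s = 0$.

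The argument is short and the only subtlety is keeping track that all the manipulations are legitimate — everything takes place in the (not necessarily commutative) ring $R$, but every product I form involves only $e$ and $s$, which commute by hypothesis, so the subring they generate (together with $1$) is commutative and the binomial-type expansions are valid. I expect no real obstacle here; the one point worth stating explicitly is that $1 + s \in U(R)$ because $s \in N(R)$, which is what converts $es(1+s) = 0$ into $es = 0$. An alternative, essentially equivalent route is to pass to $\overline{R} = R/N(R)$, where $\overline{s} = 0$ is immediate, and then argue that the lift is unique; but the direct computation above avoids invoking quotient rings and is cleaner.
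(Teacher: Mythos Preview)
Your proof is correct. Both you and the paper begin by expanding $f^2=f$ to obtain the relation $s = 2es + s^2$ (the paper writes it as $s^2 = (1-2e)s$), but you then diverge in how you finish. The paper argues by contradiction on the nilpotence index: choosing $k$ minimal with $s^k=0$, it multiplies $s^2=(1-2e)s$ by $s^{k-2}$ to get $0=s^k=(1-2e)s^{k-1}$, and since $(1-2e)^2=1$ makes $1-2e$ a unit, this forces $s^{k-1}=0$, a contradiction. You instead multiply the relation by $e$ to obtain $es(1+s)=0$, use that $1+s$ is a unit (since $s$ is nilpotent) to conclude $es=0$, and then read off $s=s^2$, so $s$ is a nilpotent idempotent and hence zero. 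The two arguments are equally short; the paper's route exploits the involution $1-2e$, while yours exploits the unipotent $1+s$ and has the minor aesthetic advantage of avoiding a proof by contradiction.
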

\begin{proof}
If $s\neq 0$ then there exists $k\geq 2$ such that $s^k=0$ and $s^{k-1}\neq 0$. Since $f$ is idempotent, $0 = f(1-f) = (e+s)(1-e-s) = s-2es-s^2$. Thus $s^2 = (1-2e)s$, and multiplying by $s^{k-2}$ we have $0 = s^k=(1-2e)s^{k-1}$. Since $1-2e$ is invertible, it follows that $0 = s^{k-1}$, which is a contradiction. Hence $n=0$.
\end{proof}

\begin{proposition}\label{prop.idem.centr}
Let $A=\sigma(R)\langle x_1,\dots,x_n\rangle$ be a skew PBW extension over a weak $(\Sigma,\Delta)$-compatible and Abelian NI ring $R$. If $e\in {\rm Idem}(R)$, then $e\in Z(A)$.
\begin{proof}
Fix $1\leq i\leq n$. By using Proposition \ref{prop.producto.nil1} (2), there exists $u\in N(R)$ such that $\sigma_i(e)=e+u$. On the other hand, since $R$ is Abelian, we have $eu=ue$ and $\sigma_i(e)\in {\rm Idem}(R)$, which implies that $u=0$ and $\sigma_i(e)=e$ by Lemma \ref{lemaidem}. Hence, $\delta_i(e)=\delta_i(e^2)=\sigma_i(e)\delta_i(e)+\delta_i(e)e=2e\delta_i(e)$, i.e., $(1-2e)\delta_i(e)=0$, whence $\delta_i(e)=0$. Finally, since $\sigma_i(e)=e$ and $\delta_i(e)=0$, for all $1\leq i\leq n$, then $e$ commutes with the $x_i$'s, and therefore $e\in Z(A)$.
\end{proof}
\end{proposition}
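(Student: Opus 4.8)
The plan is to reduce the claim $e\in Z(A)$ to checking that $e$ commutes with the coefficient ring $R$ and with each variable $x_i$. The first half is immediate: since $R$ is Abelian, every idempotent of $R$ is central, so $e\in Z(R)$ and $e$ commutes with all of $R$. Because $A$ is a left free $R$-module on the standard monomials $x^{\alpha}$ (Definition~\ref{def.skewpbwextensions}), once we also know $x_ie=ex_i$ for every $1\le i\le n$ it follows that $e$ commutes with every monomial in the $x_j$, hence with every element of $A$. By Proposition~\ref{sigmadefinition}, $x_ie=\sigma_i(e)x_i+\delta_i(e)$, so the entire problem reduces to the two pointwise identities $\sigma_i(e)=e$ and $\delta_i(e)=0$ for each $i$.

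First I would prove $\sigma_i(e)=e$. The input is Proposition~\ref{prop.producto.nil1}(2), which gives $\sigma_i(e)=e+u$ with $u\in N(R)$ (and also $\delta_i(e)\in N(R)$, which will be used below). Since $\sigma_i$ is a ring endomorphism, $\sigma_i(e)$ is again idempotent, as $\sigma_i(e)^2=\sigma_i(e^2)=\sigma_i(e)$. As $e$ is central in $R$ we have $eu=ue$, so Lemma~\ref{lemaidem}, applied to the idempotents $\sigma_i(e)$ and $e$ which differ by the nilpotent $u$, forces $u=0$; hence $\sigma_i(e)=e$.

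Next I would prove $\delta_i(e)=0$. Applying $\delta_i$ to $e=e^2$ and using the $\sigma_i$-Leibniz rule gives $\delta_i(e)=\sigma_i(e)\delta_i(e)+\delta_i(e)e$; substituting $\sigma_i(e)=e$ and using that $e$ is central yields $\delta_i(e)=2e\,\delta_i(e)$, that is, $(1-2e)\delta_i(e)=0$. Since $(1-2e)^2=1$, the element $1-2e$ is a unit of $R$, and therefore $\delta_i(e)=0$. Combining the two identities, $x_ie=\sigma_i(e)x_i+\delta_i(e)=ex_i$ for all $i$, which together with $e\in Z(R)$ yields $e\in Z(A)$.

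The only step that is not routine bookkeeping is the passage from the ``modulo $N(R)$'' information of Proposition~\ref{prop.producto.nil1}(2) to the exact equalities $\sigma_i(e)=e$ and $\delta_i(e)=0$. This is exactly where the Abelian hypothesis is indispensable: it makes $e$ central, which is what lets $u$ commute with $e$ so that Lemma~\ref{lemaidem} applies, and it is also what makes $e\,\delta_i(e)=\delta_i(e)\,e$ so that the unit $1-2e$ can be cancelled. Without it one would only get that $\sigma_i(e)-e$ and $\delta_i(e)$ are nilpotent, which is not sufficient to conclude centrality in $A$.
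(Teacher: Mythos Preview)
Your proof is correct and follows essentially the same route as the paper's: both reduce to showing $\sigma_i(e)=e$ via Proposition~\ref{prop.producto.nil1}(2) and Lemma~\ref{lemaidem}, and then $\delta_i(e)=0$ via the $\sigma_i$-Leibniz rule and the invertibility of $1-2e$. Your version is slightly more explicit in justifying why $1-2e$ is a unit and in spelling out that $e\in Z(R)$ plus $x_ie=ex_i$ suffices for $e\in Z(A)$, but there is no substantive difference in strategy.
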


Next, we formulate a result that describes the idempotent elements of skew PBW extensions over Abelian NI rings. Theorem \ref{Proposition1.3.3.a} generalizes \cite[Theorem 4.10]{Hamidizadehetal2020}.

\begin{theorem}\label{Proposition1.3.3.a}
Let $A = \sigma(R)\left \langle x_1, \dotsc, x_n \right \rangle$ be a skew PBW extension over a weak $(\Sigma,\Delta)$-compatible and Abelian $NI$ ring $R$, and $f=\sum_{i=0}^na_iX_i$ an element of $A$. If $f^2=f$, then $f= a_0\in {\rm Idem}(R)$.
\end{theorem}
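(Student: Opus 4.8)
The plan is to reduce the claim to the structural results already established and then apply Lemma \ref{lemaidem} inside the ring $A$ itself. First, since $R$ is weak $(\Sigma,\Delta)$-compatible and NI, Theorem \ref{theoremidem} applies to the idempotent $f=\sum_{i=0}^{n}a_iX_i$ and yields $a_i\in N(R)$ for every $1\le i\le n$, together with an idempotent $e\in R$ such that $a_0-e\in N(R)$. Put $h:=\sum_{i=1}^{n}a_iX_i$ and $s:=a_0-e$. All coefficients of $h$ lie in $N(R)$, so $h\in N(A)$ by Proposition \ref{prop.nilp}, and $s\in N(R)\subseteq N(A)$. Moreover $A$ is NI by Proposition \ref{th.NIiffNI}, hence $N(A)$ is an ideal of $A$, and therefore $t:=s+h\in N(A)$ while $f=a_0+h=(e+s)+h=e+t$.

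Next I would bring in the Abelian hypothesis. Since $R$ is Abelian NI and weak $(\Sigma,\Delta)$-compatible, Proposition \ref{prop.idem.centr} gives $e\in Z(A)$, so in particular $et=te$. Now $f$ and $e$ are idempotents of $A$, $t\in N(A)$, $f=e+t$, and $et=te$; applying Lemma \ref{lemaidem} to the ring $A$ we conclude $t=0$, that is, $s+\sum_{i=1}^{n}a_iX_i=0$ in $A$. Because ${\rm Mon}(A)$ is a left $R$-basis of $A$ and $1=X_0,X_1,\dots,X_n$ are distinct standard monomials, uniqueness of the representation of $0$ forces $s=0$ and $a_i=0$ for all $1\le i\le n$. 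Hence $f=a_0=e\in{\rm Idem}(R)$, as claimed.

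The substance of the argument is carried almost entirely by the three cited facts — Theorem \ref{theoremidem} (the coefficients of an idempotent are nilpotent apart from the constant term, which is idempotent modulo $N(R)$), Proposition \ref{prop.idem.centr} (idempotents of $R$ are central in $A$; this is where weak compatibility and the Abelian hypothesis are genuinely needed, via Proposition \ref{prop.producto.nil1}(2)), and the elementary Lemma \ref{lemaidem}. The only points requiring a little care are that Lemma \ref{lemaidem} is stated for an arbitrary ring and hence legitimately applies to $A$, and that $N(A)$ is an honest ideal, for which one needs $A$ itself to be NI (Proposition \ref{th.NIiffNI}) rather than just $R$.
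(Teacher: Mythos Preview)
Your proof is correct and follows essentially the same route as the paper: apply Theorem~\ref{theoremidem} to write $f=e+t$ with $e\in{\rm Idem}(R)$ and $t\in N(A)$, use Proposition~\ref{prop.idem.centr} to get $et=te$, and conclude $t=0$ via Lemma~\ref{lemaidem}. The only cosmetic difference is that the paper applies Proposition~\ref{prop.nilp} directly to the combined element $h=(a_0-e)+\sum_{i\ge1}a_iX_i$ (all coefficients in $N(R)$) rather than splitting it as $s+h$ and invoking that $N(A)$ is an ideal.
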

\begin{proof}
Let $f=\sum_{i=0}^la_iX_i \in A$ such that $f^2=f$. By using Proposition \ref{theoremidem}, we have $a_i \in N(R)$, for each $1 \leq i \leq l$, and there exist an idempotent element $e \in R$ and a nilpotent element $b \in R$ such that $a_0 = e + b$. In this way, if we consider $h = b + \sum_{i=1}^la_iX_i \in A$, then $f = e+h$. Finally, we have $h \in N(A)$ by Proposition \ref{prop.nilp} and $he = eh$ by using Proposition \ref{prop.idem.centr}. Therefore, we conclude $h=0$ by Lemma \ref{lemaidem}, and so $f = a_0$.
\end{proof}

The next corollary extends \cite[Corollaries 4.11 and 4.12]{Hamidizadehetal2020}.

\begin{corollary}\label{WeakCorollary4.11}
If $A=\sigma(R)\langle x_1,\dots,x_n\rangle$ is a skew PBW extension over a weak $(\Sigma,\Delta)$-compatible and Abelian NI ring $R$, then ${\rm Idem}(A)={\rm Idem}(R)$, and so $A$ is an Abelian ring.
\end{corollary}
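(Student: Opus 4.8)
The plan is to obtain both assertions directly from the two results immediately preceding, with no new computation required. For the equality $\mathrm{Idem}(A)=\mathrm{Idem}(R)$, I would first dispose of the easy inclusion $\mathrm{Idem}(R)\subseteq\mathrm{Idem}(A)$: by Definition \ref{def.skewpbwextensions}(i), $R$ is a subring of $A$ sharing the same identity, so every $e\in R$ with $e^2=e$ is automatically an idempotent of $A$. For the reverse inclusion $\mathrm{Idem}(A)\subseteq\mathrm{Idem}(R)$, I would simply invoke Theorem \ref{Proposition1.3.3.a}: if $f=\sum_{i=0}^{l}a_iX_i\in A$ satisfies $f^2=f$, then that theorem yields $f=a_0\in\mathrm{Idem}(R)$. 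Combining the two inclusions gives $\mathrm{Idem}(A)=\mathrm{Idem}(R)$.

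For the claim that $A$ is Abelian, I would take an arbitrary $f\in\mathrm{Idem}(A)$. By the first part, $f\in\mathrm{Idem}(R)$, and then Proposition \ref{prop.idem.centr} applies, since its hypotheses (namely $R$ weak $(\Sigma,\Delta)$-compatible and Abelian NI) are exactly the standing assumptions here; this gives $f\in Z(A)$. As $f$ was an arbitrary idempotent of $A$, every idempotent of $A$ is central, which is precisely the statement that $A$ is Abelian.

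I do not foresee any genuine obstacle: the substantive work already resides in Theorem \ref{Proposition1.3.3.a} (which in turn rests on the description of nilpotents in Proposition \ref{prop.nilp}, the lifting of idempotents modulo $N(R)$, and Lemma \ref{lemaidem}) and in Proposition \ref{prop.idem.centr}. The only point worth a sentence of care is to confirm that the hypotheses of those two results coincide verbatim with those of the corollary — they do — so the proof is a two-line citation of both.
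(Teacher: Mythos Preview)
Your proposal is correct and matches the paper's intended argument: the corollary is stated without proof in the paper precisely because it follows immediately from Theorem~\ref{Proposition1.3.3.a} (for $\mathrm{Idem}(A)\subseteq\mathrm{Idem}(R)$) and Proposition~\ref{prop.idem.centr} (for centrality), together with the trivial inclusion $\mathrm{Idem}(R)\subseteq\mathrm{Idem}(A)$. There is nothing to add.
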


In \cite[Theorem 4.14]{Hamidizadehetal2020} the von Neumann regular elements in skew PBW extensions over right duo rings were characterized. Next, we formulate a generalization of this theorem.

\begin{theorem}\label{WeakTheorem4.14}
If $A=\sigma(R)\langle x_1,\dots,x_n\rangle$ is a skew PBW extension over a weak $(\Sigma,\Delta)$-compatible and Abelian NI ring $R$, then ${\rm vnr}(A)$ consists of the elements of the form $\sum_{i=0}^ma_iX_i$ where $a_0=ue$, $a_i\in e(N(R))$, for every $i\geq 1$, for some $u\in U(R)$ and $e\in {\rm Idem}(R)$.
\end{theorem}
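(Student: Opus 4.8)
The plan is to reduce everything to the reduced quotient $\overline{R}=R/N(R)$ and the known description of von Neumann regular elements in skew PBW extensions over $\overline{\Sigma}$-rigid rings, then lift back using the nilpotent and idempotent machinery already established. First I would observe, as in the proofs of Theorems~\ref{th.units} and \ref{theoremidem}, that since $R$ is weak $(\Sigma,\Delta)$-compatible and NI, $N(R)$ is a $(\Sigma,\Delta)$-invariant ideal, $\overline{R}$ is reduced and weak $(\overline{\Sigma},\overline{\Delta})$-compatible, hence $\overline{\Sigma}$-rigid by Proposition~\ref{prop.rigid.and.weak}, and $\overline{A}=A/N(A)$ is a skew PBW extension over $\overline{R}$ by Propositions~\ref{prop.invariant} and \ref{th.NIiffNI}. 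For the reduced case, $\overline{A}$ is reduced (Proposition~\ref{prop.rigid.and.weak}), so ${\rm vnr}(\overline{A})$ consists exactly of elements $\overline{u}\,\overline{e}$ with $\overline{u}\in U(\overline{R})$ and $\overline{e}\in{\rm Idem}(\overline{R})$ — this is the content of \cite[Theorem 4.14]{Hamidizadehetal2020} specialized to the rigid/right-duo reduced setting (in a reduced ring von Neumann regular elements are $ue$ for a unit $u$ and a central idempotent $e$).

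For the forward inclusion, suppose $f=\sum_{i=0}^m a_iX_i\in{\rm vnr}(A)$, say $f=fgf$ with $g=\sum_j b_jY_j$. Passing to $\overline{A}$ gives $\overline{f}=\overline{f}\,\overline{g}\,\overline{f}$, so $\overline{f}\in{\rm vnr}(\overline{A})$ and hence $\overline{f}=\overline{a_0}=\overline{u}\,\overline{e}$ for some $u\in U(R)$, $e\in{\rm Idem}(R)$, while $\overline{a_i}=0$ for $i\ge1$, i.e.\ $a_i\in N(R)$ for $i\ge1$. It remains to show $a_i\in e(N(R))$, i.e.\ $ea_i=a_i$ modulo nothing — here I would use that $e$ is central in $A$ by Proposition~\ref{prop.idem.centr} (valid since $R$ is Abelian), together with the idempotent $\overline{e}$ being a ``unit times'' the leading coefficient: from $\overline{a_0}=\overline{u}\,\overline{e}$ one gets $\overline{e}\,\overline{a_0}=\overline{a_0}$, and then the relation $f=fgf$ read in $A$ (not just $\overline{A}$) lets one multiply on the left by $e$ and compare; since $e$ is central and $1-e$ kills $f$ modulo $N(A)$, the piece $(1-e)f\in N(A)$, so $(1-e)a_i\in N(R)$ for all $i$, which combined with $a_i\in N(R)$ for $i\ge1$ gives $a_i=ea_i+(1-e)a_i\in e(N(R))$ after absorbing the nilpotent $(1-e)a_i$ — the precise bookkeeping is that $e(N(R))+( \text{nilpotent})=e(N(R))$ as a set when $e$ is an idempotent with $(1-e)a_i$ nilpotent, since then $a_i=ea_i+$(nilpotent) and $ea_i\in e(N(R))$ forces $a_i\in e(N(R))$ provided $(1-e)a_i$ is itself in $e(N(R))$; one checks $(1-e)a_i\in e(N(R))$ directly because $e$ central and $(1-e)a_i\in N(R)$ with... so actually the cleanest route is: replace $f$ by $ef$, noting $ef$ and $f$ differ by the nilpotent central-supported $(1-e)f$, reducing to the case $e=1$, i.e.\ $a_0=u\in U(R)$, and there $a_i\in N(R)=1\cdot N(R)$ trivially. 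Unwinding, $a_0=ue$, $a_i\in e(N(R))$ for $i\ge1$, $u\in U(R)$, $e\in{\rm Idem}(R)$, as required.

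Conversely, given $f=\sum_{i=0}^m a_iX_i$ with $a_0=ue$, $a_i\in e(N(R))$ for $i\ge1$, $u\in U(R)$, $e\in{\rm Idem}(R)$, I would exhibit a von Neumann inverse explicitly. Since $e\in Z(A)$ by Proposition~\ref{prop.idem.centr} and $a_i\in eR$ for all $i$ (including $a_0=ue\in eR$), we have $ef=f$, so $f=ef$ lives in the ``corner'' $eA$. Write $f=e(u+h)$ where $h=\sum_{i\ge1}a_iX_i+(a_0-ue)\cdot(\text{nothing})$; more precisely $h:=\sum_{i=1}^m a_iX_i\in N(A)$ by Proposition~\ref{prop.nilp} (its coefficients are nilpotent), and $f=e(u+h)$ with $u+h\in U(A)$... but $u+h$ need not be a unit of $A$ since $u\in U(R)$ and $h\in N(A)$ — wait, yes it is: by Corollary~\ref{WeakCorollary4.8}, $U(A)=U(R)+N(R)A$, so $u+h\in U(A)$. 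Let $w\in A$ be its inverse. Then $f\,(we)\,f=e(u+h)\,we\,e(u+h)=e\,(u+h)w(u+h)=e(u+h)=f$ using centrality of $e$, $e^2=e$, and $w(u+h)=1$. Hence $f\in{\rm vnr}(A)$ with von Neumann inverse $we$.

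The main obstacle I anticipate is the careful handling in the forward direction of the interaction between the idempotent $e$ (defined only up to $N(R)$ by the leading coefficient) and the higher nilpotent coefficients $a_i$: one must pin down that the $a_i$ lie in the specific left ideal $e(N(R))$ and not merely in $N(R)$, and this is where centrality of idempotents (Proposition~\ref{prop.idem.centr}, hence the Abelian hypothesis) and the fact that $(1-e)f\in N(A)$ do the real work; a clean way to organize it is to multiply the von Neumann relation $f=fgf$ by the central idempotent $1-e$ to conclude $(1-e)f=(1-e)f\,g\,(1-e)f$ lies in $\operatorname{vnr}(A)\cap N(A)=\{0\}$ (a nilpotent von Neumann regular element is $0$), giving $(1-e)f=0$, i.e.\ $ef=f$ and thus $ea_i=a_i$, whence $a_i\in e(N(R))$ for $i\ge1$ and $a_0=ea_0$ with $\overline{a_0}=\overline{u}\,\overline{e}$ yielding $a_0=ue$ after adjusting $u$ within its coset (replacing $u$ by $u$ times a unit congruent to $1$ mod $N(R)$, harmless by $N(R)\subseteq J(R)$).
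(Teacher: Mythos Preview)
Your argument is essentially correct, though the exposition wanders before settling on the right idea: the ``cleanest route'' at the end---multiplying $f=fgf$ by the central idempotent $1-e$ to see that $(1-e)f$ is simultaneously von Neumann regular and nilpotent in the NI ring $A$, hence zero---is indeed the efficient way to force $ef=f$ and so $a_i\in eN(R)$. One small wrinkle: you justify the description of ${\rm vnr}(\overline{A})$ by citing \cite[Theorem~4.14]{Hamidizadehetal2020}, but that result assumes the coefficient ring is right duo, which a reduced ring need not be. The correct justification is simply that $\overline{A}$ is reduced, hence Abelian, so ${\rm vnr}(\overline{A})=U(\overline{A})\cdot{\rm Idem}(\overline{A})$ by \cite[Proposition~4.2]{Hashemi}, and then Corollaries~\ref{WeakCorollary4.8} and~\ref{WeakCorollary4.11} applied to $\overline{R}$ (where $N(\overline{R})=0$) give $U(\overline{A})=U(\overline{R})$ and ${\rm Idem}(\overline{A})={\rm Idem}(\overline{R})$.

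The paper's proof is considerably shorter and bypasses the quotient entirely. Since $A$ itself is Abelian (Corollary~\ref{WeakCorollary4.11}), one invokes directly the general fact \cite[Proposition~4.2]{Hashemi} that in an Abelian ring $f$ is von Neumann regular if and only if $f=ue$ with $u$ a unit and $e$ an idempotent, and then reads off the claimed form from $U(A)=U(R)+N(R)A$ (Corollary~\ref{WeakCorollary4.8}), ${\rm Idem}(A)={\rm Idem}(R)$ (Corollary~\ref{WeakCorollary4.11}), and centrality of $e$ in $A$ (Proposition~\ref{prop.idem.centr}). Your approach effectively reproves this by descending to $\overline{A}$, applying the same Abelian characterization there, and then lifting back; the extra machinery you deploy (idempotent lifting modulo a nil ideal, unit lifting modulo $J(R)$, the nilpotent-regular-is-zero trick, adjusting $u$ by a nilpotent so that $a_0=u'e$ exactly) is all sound but unnecessary once one notices that the Abelian vnr decomposition is already available in $A$ itself.
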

\begin{proof}
By Corollary \ref{WeakCorollary4.11}, $A$ is an Abelian ring. Additionally, $f\in{\rm vnr(A)}$ if and only if $f = ue$, for some $u\in U(A)$ and $e\in {\rm Idem}(A)$ \cite[Proposition 4.2]{Hashemi}. Hence, the result follows from Corollaries \ref{WeakCorollary4.8} and \ref{WeakCorollary4.11}.
\end{proof}

As a consequence of \cite[Theorem 1, Lemma 2]{Badawi}, we obtain the following description of the $\pi$-regular elements over Abelian rings.

\begin{proposition}\label{badawi}
    Let $R$ be an Abelian ring. Then $r$ is a $\pi$-regular element of $R$ if and only if there exist $e\in {\rm Idem}(R)$ and $u\in U(R)$ such that $er = eu$ and $(1-e)r\in N(R)$.
\end{proposition}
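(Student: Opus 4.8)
The plan is to prove the two implications separately. For the forward direction I would unwind the definition of $\pi$-regularity and pass to a corner ring; for the converse I would write $r$ as an orthogonal sum and compute directly. Both arguments use only that idempotents of $R$ are central, which is exactly the Abelian hypothesis, and the forward direction is in any case the content of the cited results of Badawi, so in the write-up one may just invoke \cite[Theorem 1, Lemma 2]{Badawi} there; below I indicate a direct route as well.

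\emph{Forward direction.} Suppose $r$ is $\pi$-regular, say $r^m s r^m=r^m$ with $s\in R$ and $m\ge 1$. Put $e:=r^m s$; then $e^2=(r^m s r^m)s=r^m s=e$, so $e\in{\rm Idem}(R)$, and $e$ is central because $R$ is Abelian. From $er^m=r^m s r^m=r^m$ we get $(1-e)r^m=0$, hence $((1-e)r)^m=(1-e)^m r^m=(1-e)r^m=0$, i.e. $(1-e)r\in N(R)$. It remains to produce $u\in U(R)$ with $eu=er$. I would work inside the corner ring $eRe=Re$ (a ring with identity $e$, again Abelian): writing $a:=er$, one has $a^m=er^m=r^m$ and $a^m(ese)=r^m(es)=e$ (using $r^m s=e$ and the centrality of $e$), so $a^m$ is right invertible in $eRe$. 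In an Abelian ring a right invertible element is two-sided invertible, so $a^m$, and hence $a$, is a unit of $eRe$. Finally $u:=er+(1-e)$ is a unit of $R$ (with inverse $v+(1-e)$, where $v\in eRe$ inverts $a$) and $eu=er$, as required.

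\emph{Converse.} Suppose $e\in{\rm Idem}(R)$ and $u\in U(R)$ satisfy $er=eu$ and $((1-e)r)^n=0$ for some $n\ge1$. Since $R$ is Abelian, $e$ and $1-e$ are central, so setting $x:=eu$ and $y:=(1-e)r$ we have $r=er+(1-e)r=eu+(1-e)r=x+y$ and $xy=eu(1-e)r=e(1-e)ur=0$, and symmetrically $yx=0$. An easy induction on $k$ then gives $(x+y)^k=x^k+y^k$ for all $k\ge1$, whence $r^n=x^n+y^n=(eu)^n=eu^n$. Taking $s:=u^{-n}e$ one computes $r^n s r^n=(eu^n)(u^{-n}e)(eu^n)=e^3u^n=eu^n=r^n$, so $r$ is $\pi$-regular.

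The only point requiring care — and the place where the Abelian hypothesis is essential — is the middle step of the forward direction: the relation $r^m s r^m=r^m$ yields only von Neumann $\pi$-regularity of $r$, which is a priori weaker than strong $\pi$-regularity, so the usual Fitting decomposition is not immediately available. It is precisely the fact that in an Abelian corner ring right invertibility forces two-sided invertibility (equivalently, the structure theory of \cite[Theorem 1, Lemma 2]{Badawi}) that bridges this gap. Everything else — the nilpotency of $(1-e)r$, the lifting of the corner unit to $U(R)$, and the whole converse — is routine once that step is in hand.
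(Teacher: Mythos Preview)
The paper does not actually prove this proposition: it simply records it as a consequence of \cite[Theorem~1, Lemma~2]{Badawi} and moves on. Your argument is correct and strictly more informative. The forward direction recovers the idempotent $e=r^{m}s$ directly, uses the Dedekind-finiteness of Abelian rings (right-invertible $\Rightarrow$ invertible, via the fact that $yx$ is a central idempotent whenever $xy=1$) inside the corner $eRe$ to make $er$ a unit there, and then lifts to $u=er+(1-e)\in U(R)$; this is precisely the content that Badawi's results encode, so citing them as you suggest is also fine. Your converse, via the orthogonal decomposition $r=eu+(1-e)r$ and the explicit pseudo-inverse $s=u^{-n}e$, is a clean computation that the paper omits entirely. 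In short, both routes agree in spirit, but yours is self-contained while the paper's is a bare citation.
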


As a result of the previous proposition, we can describe the $\pi$-regular elements in a skew PBW extension over weak $(\Sigma,\Delta)$-compatible and Abelian NI ring. Theorem \ref{WeakTheorem4.15} generalizes \cite[Theorem 4.15]{Hamidizadehetal2020}. 

\begin{theorem}\label{WeakTheorem4.15}
If $A=\sigma(R)\langle x_1,\dots,x_n\rangle$ is a skew PBW extension over a weak $(\Sigma, \Delta)$-compatible and Abelian NI ring $R$, then
\begin{center}
    $\displaystyle \pi - r(A) = \left \{ \sum a_iX_i\in A\ |\ a_0\in \pi-r(R),\ a_i\in N(R), \ \text{for}\ i \geq 1 \right \}$.
\end{center}
\end{theorem}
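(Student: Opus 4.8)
The plan is to mimic the proof strategy already used for the von Neumann regular case (Theorem \ref{WeakTheorem4.14}), but now invoking the Badawi-type characterization of Proposition \ref{badawi} instead of the Abelian-ring description of \cite[Proposition 4.2]{Hashemi}. First I would record the standing reductions: since $R$ is weak $(\Sigma,\Delta)$-compatible and NI, the ideal $N(R)$ is $(\Sigma,\Delta)$-invariant, $\overline{R}=R/N(R)$ is reduced and $\overline{\Sigma}$-rigid, $A$ is NI with $N(A)=N(R)A$ (Proposition \ref{th.NIiffNI}), and by Corollary \ref{WeakCorollary4.11} $A$ is Abelian with ${\rm Idem}(A)={\rm Idem}(R)$. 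Because $A$ is Abelian, Proposition \ref{badawi} applies to $A$: an element $f\in A$ is $\pi$-regular if and only if there are $e\in{\rm Idem}(A)={\rm Idem}(R)$ and $v\in U(A)$ with $ef=ev$ and $(1-e)f\in N(A)$.

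Next I would prove the two inclusions. For the inclusion $\supseteq$, suppose $f=\sum_{i=0}^m a_iX_i$ with $a_0\in\pi-r(R)$ and $a_i\in N(R)$ for $i\ge 1$. Applying Proposition \ref{badawi} inside $R$, pick $e\in{\rm Idem}(R)$ and $u\in U(R)$ with $ea_0=eu$ and $(1-e)a_0\in N(R)$. By Proposition \ref{prop.idem.centr} the idempotent $e$ is central in $A$, so $ef=e a_0+\sum_{i\ge1}(ea_i)X_i$ and $(1-e)f=(1-e)a_0+\sum_{i\ge1}((1-e)a_i)X_i$. Since each $a_i\in N(R)$ for $i\ge1$ and $N(R)$ is an ideal, the "tails" lie in $N(R)A=N(A)$; combined with $(1-e)a_0\in N(R)$ we get $(1-e)f\in N(A)$. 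For the condition $ef=ev$: take $v=u+(e a_1 + \text{corrections})$ — more carefully, set $v:=u+\sum_{i\ge1}a_iX_i$ (or $v:=u+ \sum_{i\ge 1}(e a_i)X_i+(1-e)$ if one wants $ef=ev$ exactly); since $\sum_{i\ge1}a_iX_i\in N(A)\subseteq J(A)$ (by Proposition \ref{th.NJ} or directly from $A$ being NI) and $u\in U(R)\subseteq U(A)$, Theorem \ref{th.units} (or Corollary \ref{WeakCorollary4.8}) gives $v\in U(A)$, and $ev=eu+\sum_{i\ge1}(ea_i)X_i=ea_0+\sum_{i\ge1}(ea_i)X_i=ef$. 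Hence $f\in\pi-r(A)$.

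For the inclusion $\subseteq$, suppose $f=\sum_{i=0}^m a_iX_i\in\pi-r(A)$. By Proposition \ref{badawi} applied to $A$, there are $e\in{\rm Idem}(R)$ (central in $A$) and $v\in U(A)$ with $ef=ev$ and $(1-e)f\in N(A)$. Write $v=\sum_j v_jY_j$; by Theorem \ref{th.units}, $v_0\in U(R)$ and $v_j\in N(R)$ for $j\ge1$. Comparing coefficients of $ef=ev$ (using centrality of $e$ so that $ef$ has $X_i$-coefficient $ea_i$) gives $ea_0=ev_0$ and $ea_i=ev_i$ for $i\ge1$; in particular $ea_0\in eU(R)$, i.e. $ea_0=ev_0$ with $v_0$ a unit, and $ea_i\in eN(R)\subseteq N(R)$ for $i\ge1$. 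From $(1-e)f\in N(A)=N(R)A$ and Proposition \ref{prop.nilp} (coefficient characterization of $N(A)$), each coefficient $(1-e)a_i\in N(R)$; for $i\ge1$ this together with $ea_i\in N(R)$ yields $a_i=ea_i+(1-e)a_i\in N(R)$, while for $i=0$ we get $(1-e)a_0\in N(R)$. Thus $e$, $v_0$ witness (via Proposition \ref{badawi} in $R$) that $a_0\in\pi-r(R)$, and $a_i\in N(R)$ for $i\ge1$, which is exactly the claimed description.

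The main obstacle I anticipate is bookkeeping in the forward direction: producing a genuine unit $v\in U(A)$ with $ef=ev$ on the nose, since the naive choice $v=u+(f-a_0)$ has the right $X_i$-coefficients only after multiplying by $e$, so one must be careful that the correction terms on the complementary idempotent $1-e$ do not destroy $v\in U(A)$ — this is handled by choosing $v=eu+(1-e)+\sum_{i\ge1}(ea_i)X_i$, which equals a unit of $R$ plus a nilpotent of $A$, hence lies in $U(A)$ by Corollary \ref{WeakCorollary4.8}, and satisfies $ev=ea_0+\sum_{i\ge1}(ea_i)X_i=ef$. Everything else is a routine transfer between $R$ and $A$ via the coefficient descriptions of units and nilpotents already established (Theorem \ref{th.units}, Proposition \ref{prop.nilp}, Proposition \ref{th.NIiffNI}) and the centrality of idempotents (Proposition \ref{prop.idem.centr}).
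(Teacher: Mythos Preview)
Your proof is correct and follows essentially the same strategy as the paper's: apply Proposition~\ref{badawi} to the Abelian ring $A$, then transfer the two conditions back and forth between $A$ and $R$ using the coefficient-wise descriptions of units (Theorem~\ref{th.units}) and nilpotents (Proposition~\ref{prop.nilp}) together with ${\rm Idem}(A)={\rm Idem}(R)$ and the centrality of idempotents. Your worry about the choice of $v$ in the $\supseteq$ direction is unnecessary---the paper simply takes $u'=u+\sum_{i\ge1}a_iX_i$, which is a unit by Theorem~\ref{th.units} and satisfies $eu'=eu+\sum_{i\ge1}ea_iX_i=ea_0+\sum_{i\ge1}ea_iX_i=ef$ directly because $e$ is central in $A$; your more elaborate alternative $v=eu+(1-e)+\sum_{i\ge1}(ea_i)X_i$ also works but is not needed.
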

\begin{proof}
    Let $f=\sum_{i=0}^la_iX_i$ be an element of $\pi-r(A)$. Since $A$ is Abelian, there exist elements $e\in{\rm Idem}(A)={\rm Idem}(R)$ and $u\in U(A)$ such that $ef=eu$ and $(1-e)f\in N(A)$, by Proposition \ref{badawi}. Then $ea_0=eu'$ and $ea_i\in N(R)$, for some $u'\in U(R)$ and for all $1\le i\le l$ by Theorem \ref{th.units}. Additionally, we have $(1-e)a_i\in N(R)$ for all $0\le i \le l$ by Proposition \ref{prop.nilp}. Thus, Proposition \ref{badawi} shows that $a_0\in\pi-r(R)$   and $a_i\in N(R)$, for $1\le i\le l$.

    On the other hand, suppose that $a_0\in \pi-r(R)$ and $a_i\in N(R)$, for all $i\ge 1$. By using Proposition \ref{badawi}, there exist $e\in{\rm Idem}(R)$ and $u\in U(R)$ such that $ea_0=eu$ and $(1-e)a_0\in N(R)$. This implies that $ef = ea_0+\sum_{i=1}^l  ea_iX_i=e\left(u+\sum_{i= 1}^n a_iX_i\right) = eu'$ and $(1-e)f = \sum_{i=0}^l (1-e)a_iX_i$ where $u' = u+\sum_{i= 1}^l a_iX_i$. Since $N(R)$ is an ideal of $R$, $(1-e)a_i\in N(R)$ and therefore $(1-e)f\in N(A)$ by Proposition \ref{prop.nilp}, and $u'\in U(A)$ by Theorem \ref{th.units}. Therefore, Proposition \ref{badawi} guarantees that $f\in\pi-r(A)$.
\end{proof}


Theorem \ref{WeakTheorem4.16} extends \cite[Theorem 4.16]{Hamidizadehetal2020}.

\begin{theorem}\label{WeakTheorem4.16}
If $A=\sigma(R)\langle x_1,\dots,x_n\rangle$ is a skew PBW extension over a weak $(\Sigma, \Delta)$-compatible and Abelian NI ring $R$, then ${\rm vnl}(A)$ consists of the elements of the form $\sum_{i=0}^la_iX_i$, where either $a_0 = ue$ or $a_0 = 1 - ue$, $a_i \in eN(R)$, for every $i \geq 1$, some element
$u \in U(R)$ and $e \in {\rm Idem}(R)$.
\end{theorem}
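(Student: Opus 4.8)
The plan is to reduce the statement about von Neumann local elements to the already-established characterizations of von Neumann regular elements (Theorem \ref{WeakTheorem4.14}) and units (Theorem \ref{th.units}), exactly mirroring the structure of the proof of that theorem. Recall that, by definition, $f \in {\rm vnl}(A)$ means $f \in {\rm vnr}(A)$ or $1 - f \in {\rm vnr}(A)$. So the whole proof is a matter of translating each of these two alternatives into a condition on the coefficients and checking that the two resulting descriptions combine into the claimed one.

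First I would set $f = \sum_{i=0}^{l} a_i X_i \in A$ and split into the two cases. In the case $f \in {\rm vnr}(A)$, Theorem \ref{WeakTheorem4.14} applies directly: $f$ has the form $\sum a_i X_i$ with $a_0 = ue$, $a_i \in eN(R)$ for $i \geq 1$, for some $u \in U(R)$, $e \in {\rm Idem}(R)$ — this is precisely the ``$a_0 = ue$'' branch of the statement. In the case $1 - f \in {\rm vnr}(A)$, write $1 - f = (1 - a_0) - \sum_{i=1}^{l} a_i X_i$; applying Theorem \ref{WeakTheorem4.14} to $1-f$ gives $1 - a_0 = ve$ and $-a_i \in eN(R)$ (equivalently $a_i \in eN(R)$, since $N(R)$ is closed under negation and $e$ is idempotent) for $i \geq 1$, some $v \in U(R)$, $e \in {\rm Idem}(R)$. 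Then $a_0 = 1 - ve$, which is the ``$a_0 = 1 - ue$'' branch. Conversely, given $f$ of either of the two prescribed forms, one runs the same computations backwards: if $a_0 = ue$ and $a_i \in eN(R)$ for $i \geq 1$, then $f = e\bigl(u + \sum_{i=1}^{l} a_i X_i\bigr)$ — using that $a_i = e a_i$ — and $u + \sum_{i=1}^{l} a_i X_i \in U(A)$ by Theorem \ref{th.units} (the tail lies in $N(R)A = N(A)$ by Proposition \ref{prop.nilp}, and $A$ is NI hence $N(A) \subseteq J(A)$), so $f \in {\rm vnr}(A)$ by Theorem \ref{WeakTheorem4.14}; if instead $a_0 = 1 - ue$ with $a_i \in eN(R)$, the same argument applied to $1 - f$ shows $1 - f \in {\rm vnr}(A)$.

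I would also invoke Corollary \ref{WeakCorollary4.11} at the outset to note ${\rm Idem}(A) = {\rm Idem}(R)$ and that $A$ is Abelian, so that the characterization in Theorem \ref{WeakTheorem4.14} (which implicitly uses ${\rm vnr}$ of an Abelian ring $=$ $\{ue : u \in U, e \in {\rm Idem}\}$, via \cite[Proposition 4.2]{Hashemi}) is available, and so that the idempotent $e$ appearing for $f$ and the one appearing for $1-f$ are genuinely idempotents of $R$. The only mild subtlety is bookkeeping with the idempotent: in the expression $a_i \in eN(R)$ the element $e$ is the \emph{same} one featuring in $a_0 = ue$ (resp.\ $a_0 = 1 - ue$), and this is guaranteed because Theorem \ref{WeakTheorem4.14} produces a single pair $(u,e)$ governing all coefficients simultaneously; I just need to be careful to carry that pair through both directions of the argument.

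The main obstacle — really the only place needing thought rather than transcription — is the sign/idempotent manipulation in the case $1 - f \in {\rm vnr}(A)$: one must verify that $-a_i \in eN(R)$ is equivalent to $a_i \in eN(R)$ (immediate, as $eN(R)$ is an additive subgroup) and, in the converse direction, that when $a_0 = 1 - ue$ one can legitimately write $1 - f = e(u + \sum_{i \geq 1}(-a_i)X_i)$, which requires $(1-a_0) = ue$ and $a_i = e a_i$ — both of which are exactly the hypotheses. Everything else is a direct citation of Theorems \ref{WeakTheorem4.14} and \ref{th.units}, Corollary \ref{WeakCorollary4.11}, and Proposition \ref{prop.nilp}.
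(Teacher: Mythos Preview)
Your proposal is correct and follows essentially the same approach as the paper: both reduce the description of ${\rm vnl}(A)$ to Theorem~\ref{WeakTheorem4.14} via the defining alternative ``$f\in{\rm vnr}(A)$ or $1-f\in{\rm vnr}(A)$''. The only difference is cosmetic: the paper compresses your case split into a one-line citation of \cite[Theorem 6.1(2)]{Hashemi}, whereas you spell out the translation of each alternative into coefficient conditions and the (trivial) sign bookkeeping for $a_i\in eN(R)$.
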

\begin{proof}
It follows from Theorem \ref{WeakTheorem4.14} and \cite[Theorem 6.1 (2)]{Hashemi}.
\end{proof}

In \cite[Theorem 4.17]{Hamidizadehetal2020}, the clean elements of skew PBW extensions over right duo rings were characterized. We present a generalization of this result.

\begin{theorem}\label{WeakTheorem4.17}
Let $A=\sigma(R)\langle x_1,\dots,x_n\rangle$ be a skew PBW extension over a weak $(\Sigma,\Delta)$-compatible and Abelian NI ring $R$. Then
\[{\rm cln}(A)=\left\lbrace\sum a_iX_i\in A\ |\ a_0\in{\rm cln}(R),\ a_i\in N(R)  \right\rbrace. \]
\end{theorem}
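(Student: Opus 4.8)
The plan is to mimic the strategy used for the $\pi$-regular and von Neumann regular cases, reducing everything to the structure theorems already established. First I would recall that, since $R$ is weak $(\Sigma,\Delta)$-compatible and Abelian NI, Corollary \ref{WeakCorollary4.11} gives ${\rm Idem}(A) = {\rm Idem}(R)$ and $A$ is Abelian, while Theorem \ref{th.units} together with Corollary \ref{WeakCorollary4.8} gives $U(A) = U(R) + N(R)A$, and Proposition \ref{prop.nilp} identifies $N(A)$ with the elements all of whose coefficients lie in $N(R)$. These three facts are the raw material.

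For the inclusion $\supseteq$, I would take $f = \sum_{i=0}^l a_iX_i$ with $a_0 = e + v$ for some $e \in {\rm Idem}(R)$ and $v \in U(R)$ (this is what $a_0 \in {\rm cln}(R)$ means), and $a_i \in N(R)$ for $i \geq 1$. Then $f = e + \bigl(v + \sum_{i=1}^l a_iX_i\bigr)$; the second summand has constant term $v \in U(R)$ and higher coefficients in $N(R)$, so it is a unit of $A$ by Theorem \ref{th.units}, and $e \in {\rm Idem}(A)$ by Corollary \ref{WeakCorollary4.11}. Hence $f \in {\rm cln}(A)$. For the inclusion $\subseteq$, suppose $f = \sum_{i=0}^l a_iX_i$ is clean in $A$, say $f = e + u$ with $e \in {\rm Idem}(A) = {\rm Idem}(R) \subseteq R$ and $u = \sum_{i=0}^l b_iX_i \in U(A)$. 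Comparing coefficients, $a_0 = e + b_0$ and $a_i = b_i$ for $i \geq 1$. By Theorem \ref{th.units}, $b_0 \in U(R)$ and $b_i \in N(R)$ for $i \geq 1$; hence $a_0 = e + b_0 \in {\rm cln}(R)$ and $a_i = b_i \in N(R)$ for $i \geq 1$, which is exactly the claimed form.

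The only subtlety — and the one place I would be careful — is the coefficient comparison: it relies on $e$ being a genuine element of $R$ (constant term only), which is precisely the content of ${\rm Idem}(A) = {\rm Idem}(R)$ from Corollary \ref{WeakCorollary4.11}, so that $e + u$ has the same monomials $X_i$ ($i\geq 1$) as $u$ and its constant term is $e + b_0$. Once that is in hand, the argument is a direct translation between $A$ and $R$ using the unit characterization, so I expect no real obstacle; the proof should be short, essentially: ``It follows from Theorem \ref{th.units}, Corollary \ref{WeakCorollary4.8}, and Corollary \ref{WeakCorollary4.11},'' with the two inclusions spelled out as above.
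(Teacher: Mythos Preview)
Your proposal is correct and follows essentially the same approach as the paper: both arguments hinge on ${\rm Idem}(A)={\rm Idem}(R)$ (Corollary \ref{WeakCorollary4.11}) and $U(A)=U(R)+N(R)A$ (Corollary \ref{WeakCorollary4.8}, which is the packaged form of your Theorem \ref{th.units}), from which the two inclusions follow exactly as you describe. The paper's proof is simply the one-line citation you anticipated, with Proposition \ref{th.NIiffNI} invoked in place of Proposition \ref{prop.nilp} for the identification $N(A)=N(R)A$; your expanded version of the two inclusions is accurate.
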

\begin{proof}
The result follows from  Proposition \ref{th.NIiffNI} and Corollaries \ref{WeakCorollary4.8} and \ref{WeakCorollary4.11}.
\end{proof}

\subsection{Strongly harmonic and Gelfand rings}\label{Gelfandrings}Mulvey \cite{Mulvey1979a} introduced {\it strongly harmonic rings} with the aim of generalizing the Gelfand duality from $C^{*}$-algebras to  rings (not necessarily commutative). Borceux and Van den Bossche \cite{Borceux1983} modified the definition of strongly harmonic rings and defined the {\it Gelfand rings}. A ring $R$ is called \textsl{Gelfand} (resp. {\it strongly harmonic}) if for each pair of distinct maximal right ideals (resp. maximal ideals) $M_1$, $M_2$ of $R$, there are right ideals (resp. ideals) $I_1$, $I_2$ of $R$ such that $I_1\not\subseteq M_1$, $I_2\not\subseteq M_2$ and $I_1I_2=0$. Equivalently, $R$ is a Gelfand ring (resp. strongly harmonic) if for each pair of distinct maximal right ideals (resp. maximal ideals) $M_1$, $M_2$ of $R$, there are elements $r\notin M_1$, $s\notin M_2$ of $R$ such that $rRs=0$. Gelfand rings and strongly harmonic rings have been investigated by different authors such as Borceux and Van den Bossche \cite{Borceux1983}, Borceux et al., \cite{Borceuxetal1984}, Carral \cite{Carral1980}, Demarco and Orsatti \cite{DemarcoOrsatti1971}, Mulvey \cite{Mulvey1976, Mulvey1979a, Mulvey1979b}, Sun \cite{Sun1992a, Sun1992b}, Zhang et al., \cite{Zhangetal2006}. We continue the study of Gelfand and Harmonic rings in the setting of skew PBW extensions over weak $(\Sigma, \Delta)$-compatible rings. 

\begin{proposition}\label{prop.strongly.coef}
Let $A=\sigma(R)\langle x_1,\dots,x_n\rangle$ be a skew PBW extension over a weak $(\Sigma,\Delta)$-compatible and NI ring $R$. Then $A/J(A)$ is a Gelfand ring {\rm (}resp. strongly harmonic{\rm )} if and only if for each pair of distinct maximal right ideals (resp. maximal ideals) $M_1$, $M_2$ of $A$, there exist elements of $R$, $r\notin M_1$ and $s\notin M_2$ such that $rRs\subseteq N(R)$.
\end{proposition}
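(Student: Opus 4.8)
The plan is to reduce the statement about $A/J(A)$ to the equivalence between the Gelfand (resp. strongly harmonic) condition and the existence of coefficient witnesses living in $R$, using the description $N(A)=N(R)A=J(A)$ that we already have from Propositions \ref{th.NIiffNI} and \ref{th.NJ}. First I would record the structural facts: since $R$ is NI and weak $(\Sigma,\Delta)$-compatible, $A$ is NI by Proposition \ref{th.NIiffNI}, and moreover $A$ is NJ by Proposition \ref{th.NJ}, so $J(A)=N(A)=N(R)A$. Consequently $\overline{A}:=A/J(A)$ is a ring in which the natural map $A\to\overline{A}$ induces an inclusion-preserving bijection between right ideals (resp. two-sided ideals) of $A$ containing $J(A)$ and right ideals (resp. ideals) of $\overline{A}$; in particular the maximal right ideals (resp. maximal ideals) of $\overline{A}$ correspond exactly to those of $A$, all of which contain $J(A)$ since $J(A)$ is the intersection of the maximal right ideals.

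Next I would unwind both sides. By the "elementwise" reformulation of Gelfand/strongly harmonic recalled just before the proposition, $\overline{A}$ is Gelfand (resp. strongly harmonic) iff for each pair of distinct maximal right ideals (resp. ideals) $\overline{M_1}\ne\overline{M_2}$ of $\overline{A}$ there are $\overline{r}\notin \overline{M_1}$, $\overline{s}\notin\overline{M_2}$ with $\overline{r}\,\overline{A}\,\overline{s}=\overline{0}$. Translating through the correspondence, and writing $M_1,M_2$ for the preimages in $A$, this says: for each pair of distinct maximal right ideals (resp. ideals) $M_1,M_2$ of $A$ there exist $r\notin M_1$, $s\notin M_2$ in $A$ with $rAs\subseteq J(A)=N(R)A$. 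So the proposition will follow once I show that this last condition (witnesses in $A$, product in $N(R)A$) is equivalent to the stated one (witnesses in $R$, product in $N(R)$).

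The substantive step — and the place I expect the main work — is to push the witnesses $r,s\in A$ down to $R$. Given $r=\sum_{i} r_iX_i \notin M_1$, since $M_1$ is a maximal right ideal at least one coefficient, say $r_{i_0}$, must avoid $M_1$ (otherwise $r\in (M_1\cap R)A\subseteq M_1$; here one uses that $N(R)A\subseteq J(A)\subseteq M_1$ together with a degree/leading-coefficient argument, or more directly that $r-r_{i_0}X_{i_0}$ together with the other terms can be absorbed). Actually the cleanest route is: modulo $J(A)=N(R)A$, Proposition \ref{prop.nilp} and Proposition \ref{prop.producto.nil1}(1) tell us that the product $\overline{r}\,\overline{s}=0$ in $\overline{A}$ forces all coefficient products $r_i s_j\in N(R)$; combined with the weak compatibility (Proposition \ref{prop.weak}) this gives $r_i R s_j\subseteq N(R)$ for every pair $i,j$, since $r_is_j\in N(R)$ implies $r_i t s_j\in N(R)$ for all $t\in R$ in an NI ring where $N(R)$ is an ideal — one shifts $t$ past using that $N(R)$ is a two-sided ideal, no compatibility even needed for that last move. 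Then I pick the coefficient $r_{i_0}\notin M_1$ and $s_{j_0}\notin M_2$ (these exist because $M_1,M_2$ contain $N(R)A$, hence $r\in M_1$ would follow if every $r_i\in M_1$, using that $X_i\cdot 1$-type reductions keep us inside $M_1$ — for a right ideal, $r_iX_i\in M_1$ whenever $r_i\in M_1$, so $r=\sum r_iX_i\in M_1$, contradiction; the same for the ideal case with $M_2$). These $r_{i_0},s_{j_0}\in R$ satisfy $r_{i_0}\notin M_1$, $s_{j_0}\notin M_2$, and $r_{i_0}Rs_{j_0}\subseteq N(R)$, which is exactly the right-hand condition. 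For the converse direction, given such $r,s\in R$ with $rRs\subseteq N(R)$, one checks $rAs\subseteq N(R)A=J(A)$ by expanding $x_k r = \sigma_k(r)x_k+\delta_k(r)$ repeatedly and invoking Proposition \ref{prop.weak}(1),(4) to keep all the resulting coefficients of the form $\sigma^\alpha(r)\cdots$, $\delta^\beta(r)\cdots$ inside $N(R)$ when multiplied on the right by $s$ (through $R$), so that $rAs\subseteq N(R)A$; since also $r\notin M_1\supseteq J(A)$ and $s\notin M_2$, these serve as the required $A$-witnesses. I expect the bookkeeping in this converse expansion — tracking that every coefficient that appears after moving all $x$'s to the right lands in $N(R)$ after right-multiplication by $s$ via $R$ — to be the one genuinely fiddly computation, but it is routine given the closure properties in Proposition \ref{prop.weak} and the fact that $N(R)$ is a $(\Sigma,\Delta)$-invariant ideal.
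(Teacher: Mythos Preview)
Your overall strategy matches the paper's: translate the Gelfand condition on $A/J(A)$ to the existence of $f,g\in A$ with $f\notin M_1$, $g\notin M_2$, $fAg\subseteq J(A)=N(A)$, then pass to coefficients using Proposition~\ref{prop.producto.nil1}(1) and the observation that if every coefficient of $f$ lies in the right ideal $M_1$ then $f\in M_1$.

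There is, however, one incorrect justification and one unnecessary complication. In the forward direction you only use $\overline{r}\,\overline{s}=0$ to get $r_i s_j\in N(R)$, and then assert $r_i t s_j\in N(R)$ for all $t$ because ``one shifts $t$ past using that $N(R)$ is a two-sided ideal''. That reasoning is wrong: being an ideal does not allow insertion of elements in the middle, and weak compatibility (Proposition~\ref{prop.weak}) says nothing about arbitrary $t\in R$. The implication \emph{does} hold, but for a different reason: since $R$ is NI, $R/N(R)$ is reduced, hence semicommutative. The paper avoids this detour altogether: since the hypothesis is $fAg\subseteq N(A)$, one already has $fcg\in N(A)$ for each $c\in R$, and Proposition~\ref{prop.producto.nil1}(1) applied to the product $(fc)g$ gives $a_i c b_j\in N(R)$ directly.

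For the converse you propose expanding $rhs$ via commutation rules and Proposition~\ref{prop.weak}. (Small slip: the relevant relation is $x_k s=\sigma_k(s)x_k+\delta_k(s)$, not $x_k r$, since $r$ sits on the left.) This can be made to work, but the paper's argument is one line: for $h=\sum c_k Z_k$ one has $rh=\sum (rc_k)Z_k$, and then Proposition~\ref{prop.producto.nil1}(1) applied to the pair $(rh,s)$ gives $rhs\in N(A)$ because each $(rc_k)s\in rRs\subseteq N(R)$.
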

\begin{proof} Let $M_1$, $M_2$ be a pair of distinct maximal right ideals of $A$. Since $A/J(A)$ is a Gelfand ring, there exist elements $f, g$ of $A$ such that $f=\sum_{i=0}^ma_iX_i\notin M_1$, $g=\sum_{j=0}^lb_jX_j\notin M_2$ and $fAg\subseteq J(A)=N(A)$. Since $f\notin M_1$, $a_t\notin M_1$ for some coefficient of $f$ with $1 \le t \le m$. In this way, we also have $b_s\notin M_2$ for some coefficient of $g$ with $1 \le s \le l$. Since $fRg \subseteq fAg \subseteq N(A)$, then $fcg \in N(A)$ and $a_icb_j\in N(R)$ for all $i,j$, and for every $c \in R$ by Proposition \ref{prop.nilp}. Thus, if we consider $r=a_t$ and $s=b_s$, the result follows. 

For the other implication, let $M_1$, $M_2$ be a pair of distinct maximal right ideals of $A$. Since $rRs\subseteq N(R)$, for some $r \notin M_1$ and $s \notin M_2$, then $rAs\subseteq N(A)$, by Proposition \ref{prop.nilp}. This implies that $A/J(A)$ is a Gelfand ring. 

The proof of the strongly harmonic case is analogous.
\end{proof}

\begin{proposition}\label{prop.not.local}
If $A=\sigma(R)\langle x_1,\dots,x_n\rangle$ is a skew PBW extension over a weak $(\Sigma,\Delta)$-compatible and NI ring, then $A$ is not a local ring.
\end{proposition}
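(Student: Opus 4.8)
I want to show that $A=\sigma(R)\langle x_1,\dots,x_n\rangle$ over a weak $(\Sigma,\Delta)$-compatible NI ring $R$ is never local, i.e.\ that $A$ has more than one maximal right ideal (equivalently, that $A/J(A)$ is not a division ring, or that $A$ has a nonzero proper idempotent after passing to a suitable quotient). The cleanest route is to argue that $R$ itself cannot be local, and then lift that failure to $A$. First I would observe that if $R$ were local, then $U(R)=R\setminus M$ for the unique maximal right ideal $M$, and the generators $x_1,\dots,x_n$ of $A$ give rise to proper nonunit elements (e.g.\ $x_1$ itself, which lies in the augmentation-type ideal $R x_1+\dots+Rx_n$ but also many more): the key point is that $1+x_1$ and $x_1$ cannot both be units, yet by Theorem~\ref{th.units} an element $\sum a_iX_i$ is a unit iff $a_0\in U(R)$ and $a_i\in N(R)$ for $i\geq 1$. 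So $x_1$ is \emph{not} a unit of $A$ (its constant term is $0$), while $1+x_1$ \emph{is} not a unit either (its $X_1$-coefficient $1$ is not nilpotent unless $R=0$). If $A$ were local with maximal right ideal $\mathcal M$, then for any $a\in A$ either $a$ or $1-a$ is a unit; applying this to $a=x_1$ forces $1-x_1\in U(A)$, contradicting Theorem~\ref{th.units} since the coefficient of $X_1$ in $1-x_1$ is $-1\notin N(R)$ (as $-1$ is a unit, and a nonzero ring has no nilpotent units).

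**Key steps, in order.** (1) Reduce to the characterization of units: invoke Theorem~\ref{th.units} to describe $U(A)$ exactly. (2) Recall that $R\neq 0$ (it has $1\neq 0$ since $A$ is a ring with identity and $R$ shares it), so $-1\notin N(R)$: indeed if $-1\in N(R)$ then $1=(-1)^2\in N(R)$ forcing $1$ nilpotent, hence $1=0$, a contradiction. (3) Exhibit the element $f=1-x_1\in A$; its expansion as $\sum a_iX_i$ has $a_0=1\in U(R)$ but $a_1=-1\notin N(R)$, so by Theorem~\ref{th.units}, $f\notin U(A)$. (4) Likewise $x_1\in A$ has $a_0=0\notin U(R)$, so $x_1\notin U(A)$. (5) Conclude: in a local ring, for every element $a$, either $a\in U(A)$ or $1-a\in U(A)$; but neither $x_1$ nor $1-x_1$ is a unit of $A$, a contradiction. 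Hence $A$ is not local.

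**The main obstacle.** The only subtlety is being sure which characterization of ``local'' to use and that it applies to noncommutative $A$. The standard fact is: a (not necessarily commutative) ring $S$ is local iff for every $a\in S$, $a$ or $1-a$ is a unit (equivalently $S/J(S)$ is a division ring, equivalently $S$ has a unique maximal right ideal). I would state this explicitly and cite it (e.g.\ Lam, \emph{A First Course in Noncommutative Rings}), then feed in $a=x_1$. A minor point worth a sentence: one must check $x_1\neq 0$ and that the expression $1-x_1=\sum a_iX_i$ is the genuine normal form with $X_0=1$, $X_1=x_1$, which is immediate from Definition~\ref{def.skewpbwextensions}(ii) since $\{X_i\}\subseteq {\rm Mon}(A)$ is a left $R$-basis. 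Everything else is a direct application of Theorem~\ref{th.units}, so there is no real computation to grind through; the proof is essentially three lines once the local-ring criterion and Theorem~\ref{th.units} are in hand.
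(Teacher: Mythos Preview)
Your argument is correct and runs along the same line as the paper's: both proofs pick a generator $x_i$, observe via Theorem~\ref{th.units} that it is not a unit of $A$, and then contradict locality. The only difference is which characterization of ``local'' is invoked. You use the criterion that in a local ring either $a$ or $1-a$ is a unit, and check directly from Theorem~\ref{th.units} that neither $x_1$ nor $1-x_1$ is a unit (since $-1\notin N(R)$). The paper instead uses that in a local ring the nonunits coincide with $J(A)$, then appeals to Proposition~\ref{th.NJ} to get $x_n\in J(A)=N(A)$ and to Proposition~\ref{prop.nilp} to force $1\in N(R)$. Your route is marginally more economical, as it bypasses the NJ identity and the nilpotent characterization; the paper's route has the advantage of making the contradiction land squarely on $1\in N(R)$ via results already recorded in the text. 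Either way the proof is essentially three lines.
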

\begin{proof}
Suppose that $A$ is a local ring. By using Theorem \ref{th.units}, we have $x_n$ is not an unit of $A$. In this way, since $J(A)=N(A)$ we have that $x_n \in N(A)$, which contradicts Proposition \ref{prop.nilp}. Hence $A$ is not local.
\end{proof}

\begin{example} The following examples illustrate the above theorem.
    \begin{itemize}
       \item[\rm (i)] In the Example \ref{exampleweak1}, we have that the Ore extension $R_2[x;\sigma,\delta]$ is not a local ring by Proposition \ref{prop.not.local}.
       \item[\rm (ii)] Consider the skew PBW extension $A = \sigma({\rm S}_2(\mathbb{Z}))\langle x,y,z \rangle$ over the ring ${\rm S}_2(\mathbb{Z})$ in Example \ref{exampleweak2}. Proposition \ref{prop.not.local} shows that $A$ is not a local ring. 
    \end{itemize}
\end{example} 

\begin{theorem}\label{th.no.gelfand}
Let $A=\sigma(R)\langle x_1,\dots,x_n\rangle$ be a skew PBW extension over a weak $(\Sigma,\Delta)$-compatible and NI ring $R$. If $N(R)$ is a prime ideal of $R$ then $A/J(A)$ is not a Gelfand ring.
\end{theorem}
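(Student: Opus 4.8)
The plan is to argue by contradiction, combining the identification of $J(A)$ with the nil radical, the failure of locality for $A$, and the primeness of $N(R)$. So I would begin by assuming that $A/J(A)$ is a Gelfand ring. The first ingredient is to record that, by Propositions \ref{th.NIiffNI} and \ref{th.NJ}, one has $J(A)=N(A)=N(R)A$; in particular every nilpotent element of $R$ lies in $N(A)=J(A)$, and since $J(A)$ is contained in every maximal right ideal of $A$, it follows that $N(R)\subseteq M$ for each maximal right ideal $M$ of $A$.

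Next I would appeal to Proposition \ref{prop.not.local}, which says that $A$ is not a local ring. Since $A$ is a nonzero ring with identity it has at least one maximal right ideal, and a ring with a unique maximal right ideal is local; hence $A$ has two \emph{distinct} maximal right ideals $M_1$ and $M_2$. Applying Proposition \ref{prop.strongly.coef} to this pair, the assumption that $A/J(A)$ is Gelfand yields elements $r,s\in R$ with $r\notin M_1$, $s\notin M_2$ and $rRs\subseteq N(R)$. This is the moment the hypothesis enters: because $N(R)$ is a prime ideal of $R$, the inclusion $rRs\subseteq N(R)$ forces $r\in N(R)$ or $s\in N(R)$. In the first case $r\in N(R)\subseteq M_1$, contradicting $r\notin M_1$; in the second $s\in N(R)\subseteq M_2$, contradicting $s\notin M_2$. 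Either way we obtain a contradiction, so $A/J(A)$ cannot be Gelfand.

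I do not expect a genuine obstacle here: once the structural results of Section \ref{Definitions} and the two preceding propositions are in hand, the proof reduces to a short deduction. The only points worth a word of justification are that ``$A$ is not local'' produces two distinct maximal right ideals (a nonzero ring always admits a maximal right ideal, and uniqueness of such an ideal is exactly locality), and that the Gelfand property of the quotient $A/J(A)$ translates into the coefficient-level condition $rRs\subseteq N(R)$ with $r,s\in R$ — but that translation is precisely the content of Proposition \ref{prop.strongly.coef}, so no extra work is needed.
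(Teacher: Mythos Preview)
Your proof is correct and follows essentially the same approach as the paper: assume $A/J(A)$ is Gelfand, use Proposition~\ref{prop.not.local} to produce two distinct maximal right ideals, invoke Proposition~\ref{prop.strongly.coef} to obtain $r,s\in R$ with $rRs\subseteq N(R)$, and then use primeness of $N(R)$ together with $N(R)\subseteq N(A)=J(A)\subseteq M_1,M_2$ to derive a contradiction. Your write-up is slightly more explicit in justifying $J(A)=N(A)$ and the passage from non-locality to the existence of two distinct maximal right ideals, but the argument is the same.
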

\begin{proof}
Suppose that $A/J(A)$ is a Gelfand ring. By using Proposition \ref{prop.not.local}, there exist at least two maximal right ideals $M_1,M_2$ of $A$. Additionally, since $A/J(A)$ is a Gelfand ring, there exist $r,s\in R$ such that $r\notin M_1$, $s\notin M_2$ and $rRs\subseteq N(R)$ by Proposition \ref{prop.strongly.coef}. Since $N(R)$ is a prime ideal of $R$, we have $r\in N(R)$ or $s\in N(R)$. Finally, since $N(R)\subseteq N(A)=J(A)\subseteq M_1, M_2$, then $r\in M_1$ or $s\in M_2$ which is a contradiction. Hence, we conclude $A/J(A)$ is not a Gelfand ring.
\end{proof}

\begin{corollary}\label{corollary.no.gelfand}
Let $A=\sigma(R)\langle x_1,\dots,x_n\rangle$ be a skew PBW extension over a weak $(\Sigma,\Delta)$-compatible and NI ring $R$. If $N(R)$ is a prime ideal then $A/N_*(A)$ is not a Gelfand ring.
\end{corollary}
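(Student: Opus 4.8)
The plan is to deduce the statement from Theorem \ref{th.no.gelfand} together with the elementary inclusion $N_*(A)\subseteq J(A)$, via a routine quotient-of-quotients argument.

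First I would record the facts about $A$ needed here. By Proposition \ref{th.NIiffNI} the extension $A$ is NI, and by Proposition \ref{th.NJ} it is NJ, so $J(A)=N(A)$. In any ring the prime radical is a nil ideal, hence contained in the Jacobson radical, so $N_*(A)\subseteq J(A)$ (this also follows from $N_*(A)\subseteq N(A)=J(A)$). Since $N_*(A)$ is a two-sided ideal contained in $J(A)$, every maximal right ideal of $A$ contains $N_*(A)$; consequently the maximal right ideals of $A/N_*(A)$ are exactly the images $M/N_*(A)$ with $M$ a maximal right ideal of $A$, and likewise the maximal right ideals of $A/J(A)$ are the $M/J(A)$. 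In particular $A/J(A)$ is (isomorphic to) the quotient of $A/N_*(A)$ by the ideal $J(A)/N_*(A)$, which is contained in the Jacobson radical of $A/N_*(A)$.

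Next, arguing by contradiction, I would assume $A/N_*(A)$ is a Gelfand ring and show $A/J(A)$ is Gelfand. Let $M_1,M_2$ be distinct maximal right ideals of $A$. Then $M_1/N_*(A)$ and $M_2/N_*(A)$ are distinct maximal right ideals of $A/N_*(A)$, so by the Gelfand property there are $r,s\in A$ with $\overline{r}\notin M_1/N_*(A)$, $\overline{s}\notin M_2/N_*(A)$ and $\overline{r}\,(A/N_*(A))\,\overline{s}=0$; equivalently $r\notin M_1$, $s\notin M_2$ and $rAs\subseteq N_*(A)\subseteq J(A)$. Passing to $A/J(A)$, the images $\widetilde{r},\widetilde{s}$ satisfy $\widetilde{r}\notin M_1/J(A)$, $\widetilde{s}\notin M_2/J(A)$ and $\widetilde{r}\,(A/J(A))\,\widetilde{s}=\overline{rAs}=0$. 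As $M_1,M_2$ were arbitrary distinct maximal right ideals of $A$, and these exhaust the maximal right ideals of $A/J(A)$, this shows $A/J(A)$ is Gelfand, contradicting Theorem \ref{th.no.gelfand}. Hence $A/N_*(A)$ is not a Gelfand ring.

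I do not anticipate a genuine obstacle: the only points requiring a little care are the bijective correspondence of maximal right ideals under quotients by ideals contained in the Jacobson radical, and the observation that the witnessing relation $rAs\subseteq N_*(A)$ descends to $\widetilde{r}\,(A/J(A))\,\widetilde{s}=0$ precisely because $N_*(A)\subseteq J(A)$. One may alternatively phrase the whole proof as the single remark that $A/J(A)$ is a quotient of $A/N_*(A)$ by an ideal contained in its Jacobson radical, and that the Gelfand property is inherited by such quotients; the conclusion is then immediate from Theorem \ref{th.no.gelfand}.
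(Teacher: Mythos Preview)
Your argument is correct and matches the paper's intent: the corollary is stated there without a separate proof, being regarded as immediate from Theorem \ref{th.no.gelfand}, and your quotient-of-quotients reduction via $N_*(A)\subseteq J(A)$ is exactly the way to make that implication precise. The only remark is that you do not actually need Propositions \ref{th.NIiffNI} or \ref{th.NJ} for the deduction, since $N_*(A)\subseteq J(A)$ holds in any ring; the rest of your proof goes through unchanged.
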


\begin{example} \label{examplenotGelfand}
If we consider the Theorem \ref{th.no.gelfand} and the Corollary \ref{corollary.no.gelfand}, we conclude $A/J(A)$ and $A/N_{*}(A)$ are not Gelfand rings where $A$ is the Ore extension $R_2[x;\sigma,\delta]$ over the ring of upper triangular matrices $R_2$ in Example \ref{exampleweak1}, or the skew PBW extension $A = \sigma({\rm S}_2(\mathbb{Z}))\langle x,y,z \rangle$ over the ring ${\rm S}_2(\mathbb{Z})$ in Example \ref{exampleweak2}.
\end{example}

\begin{theorem}\label{theorem.harmonic.unique}
    Let $A=\sigma(R)\langle x_1,\dots,x_n\rangle$ be a skew PBW extension over a weak $(\Sigma,\Delta)$-compatible and NI ring. If $N(R)$ is a prime ideal then $A/J(A)$ is strongly harmonic if and only if $A$ has a unique maximal ideal.
\end{theorem}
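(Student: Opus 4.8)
The plan is to reduce the statement, via Proposition \ref{prop.strongly.coef}, to a primeness argument over $R$, in the same spirit as the proof of Theorem \ref{th.no.gelfand}. First I would record the preliminary observation that every maximal (two-sided) ideal $M$ of $A$ contains $J(A)$: if not, then $J(A)+M=A$ by maximality of $M$, so $1-j\in M$ for some $j\in J(A)$; but $1-j$ is a unit of $A$, contradicting properness of $M$. Hence $M\mapsto M/J(A)$ is a bijection between the maximal ideals of $A$ and those of $A/J(A)$, so in particular $A$ has a unique maximal ideal if and only if $A/J(A)$ does. I would also recall that $A$ is NJ by Proposition \ref{th.NJ}, so $J(A)=N(A)=N(R)A$ and therefore $N(R)\subseteq J(A)$.

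For the ``if'' direction, suppose $A$ has a unique maximal ideal. Then there is no pair of \emph{distinct} maximal ideals of $A$, so the condition appearing in Proposition \ref{prop.strongly.coef} (strongly harmonic case) is satisfied vacuously, and hence $A/J(A)$ is strongly harmonic. For the ``only if'' direction, assume $A/J(A)$ is strongly harmonic and, towards a contradiction, that $A$ has two distinct maximal ideals $M_1\neq M_2$. By Proposition \ref{prop.strongly.coef} there exist elements $r,s\in R$ with $r\notin M_1$, $s\notin M_2$ and $rRs\subseteq N(R)$. Since $N(R)$ is a prime ideal of $R$, either $r\in N(R)$ or $s\in N(R)$; but $N(R)\subseteq J(A)\subseteq M_1\cap M_2$ by the first paragraph, so $r\in M_1$ or $s\in M_2$, a contradiction. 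Thus $A$ has at most one maximal ideal, and since $A$ is a nonzero ring it has exactly one.

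The only delicate point is the reduction itself: once Proposition \ref{prop.strongly.coef} is available, the remaining argument is a short application of the primeness of $N(R)$, parallel to Theorem \ref{th.no.gelfand}. The essential difference --- and the reason the conclusion here is an equivalence rather than a negative result --- is that for two-sided maximal ideals one cannot invoke Proposition \ref{prop.not.local} to force the existence of two distinct ones: non-locality of $A$ only produces two distinct maximal \emph{right} ideals, which is irrelevant to the strongly harmonic notion. So the work is really in setting up the dictionary between maximal ideals of $A$, maximal ideals of $A/J(A)$, and coefficients in $R$, after which the theorem falls out.
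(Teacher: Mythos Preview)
Your proposal is correct and follows essentially the same route as the paper: the paper's proof simply observes that a unique maximal ideal of $A$ gives a unique maximal ideal of $A/J(A)$ (hence strongly harmonic vacuously), and for the converse invokes ``the proof of Theorem \ref{th.no.gelfand}'' --- which is exactly the primeness argument via Proposition \ref{prop.strongly.coef} that you spell out. Your additional preliminary paragraph (that $J(A)\subseteq M$ for every maximal ideal $M$, and $N(R)\subseteq J(A)$) just makes explicit what the paper leaves implicit, and your closing remark about why Proposition \ref{prop.not.local} cannot be used here is a nice clarification of the asymmetry with Theorem \ref{th.no.gelfand}.
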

\begin{proof}
    If $A$ has a unique maximal ideal, then $A/J(A)$ has a unique maximal ideal and therefore $A/J(A)$ is strongly harmonic. If $A$ has at least two maximal ideals, the proof of the Theorem \ref{th.no.gelfand} guarantees that $A/J(A)$ is not a strongly harmonic ring.
\end{proof}


\section{Future work}\label{future}

Different authors have studied the notion of compatibility for the study of modules over Ore extensions and skew PBW extensions (e.g., \cite{AlhevazMoussavi2012, Annin2004, NinoRamirezReyes2020, Reyes2019}). Considering the results obtained in this paper, we think as future work to investigate a classification of several types of elements in modules extended over skew PBW extensions. Also, following this idea and the notions of {\em strongly harmonic and Gelfand modules} introduced by Medina-B\'arcenas et al. \cite{Medinaetal2020}, it will be interesting to study these modules for these noncommutative rings.

\medskip

On the other hand, since skew PBW extensions are examples of noncommutative algebraic structures such as the {\em semi-graded rings} defined by Lezama and Latorre \cite{Lezamalatorre2017}, which have been studied recently from the point of view of noncommutative algebraic geometry (topics such as Hilbert polynomial and Hilbert series, generalized Gelfand-Kirillov dimension, noncommutative schemes, and Serre-Artin-Zhang-Verevkin theorem, and others, see  \cite{Lezama2020, Lezamagomez2019}), and having in mind that semi-graded rings are more general than $\mathbb{N}$-graded rings, the natural task is to investigate the several types of elements studied in this paper and the notions of strongly harmonic and Gelfand rings in this more general setting.

\end{document}